\DeclareMathSymbol{\minus} {\mathord}{operators}{"2D}
\numberwithin{equation}{section}
\newcommand\GLn{\textrm{GL}_n}
\newcommand\SLn{\textrm{SL}_n}
\newcommand\SLt{\textrm{SL}_2}
\newcommand\ZZ{\mathbb Z}
\newcommand\sgn{\textrm{sgn}}
\renewcommand\bar\overline
\newtheorem{thm}{Theorem}[section]
\newtheorem{lemma}[thm]{Lemma}
\newtheorem{prop}[thm]{Proposition}
\newtheorem{fact}[thm]{Fact}
\theoremstyle{definition}
\newtheorem{defn}[thm]{Definition}
\theoremstyle{remark}
\theoremstyle{claim}
\begin{document}

\title{Generic thinness in finitely generated subgroups of $\SLn(\mathbb Z)$}
\keywords{Thin groups, free groups, ping-pong}
\subjclass{20G20, 20G07, 11E57}
\author{Elena Fuchs}
\address{Elena Fuchs, Department of Mathematics, University of Illinois, 1409 W. Green St, Urbana IL 61801}
\email{lenfuchs@illinois.edu}
\author{Igor Rivin}
\address{Igor Rivin, Temple University, Mathematics Department, Wachman Hall, 1805 N. Broad St, Philadelphia PA 19122} 
\address{Igor Rivin, School of Mathematics and Statistics, University of St. Andrews, St Andrews KY16 9SS, Scotland}
\email{igor.rivin@st-andrews.ac.uk}

\maketitle

\begin{abstract}
We show that for any $n\geq 2$, two elements selected uniformly at random from a \emph{symmetrized} Euclidean ball of radius $X$ in $\SLn(\mathbb Z)$ will generate a thin free group with probability tending to $1$ as $X\rightarrow \infty.$  This is done by showing that the two elements will form a ping-pong pair, when acting on a suitable space, with probability tending to $1$.  On the other hand, we give an upper bound less than $1$ for the probability that two such elements will form a ping-pong pair in the usual Euclidean ball model in the case where $n>2$.
\end{abstract}

\section{Introduction}
Given a subgroup $\Gamma$ of $\GLn(\ZZ)$ with Zariski closure $\overline \Gamma$ in $\GLn(\mathbb C)$, $\Gamma$ is called a {\it thin group} if it is of infinite index in $\overline \Gamma\cap \GLn(\ZZ)$.

In this paper, we investigate the question of whether a generic finitely generated subgroup of $\SLn(\ZZ)$ is thin.  Our notion of genericity is described via a Euclidean model, as we discuss below.  This question is motivated in part by recent developments in number theory (\cite{bgs}, \cite{salsarnak}, \cite{SalehiVarju}, etc.) which have made approachable previously unsolved arithmetic problems involving thin groups (see, for example, \cite{Fuchsbull} and \cite{Kontbull} for an overview).  Given these new ways to handle such groups in arithmetic settings, it has become of great interest to develop a better understanding of thin groups in their own right: for example, \cite{FMS}, \cite{SV}, and \cite{BT} have answered the question of telling whether a given finitely generated group is thin (given in terms of its generators) in various settings.  Our question of asking whether such a group is generically thin of a similar flavor, but one has more freedom in avoiding the more difficult cases (as we show, the generic $2$-generator subgroup of $\textrm{SL}_n(\mathbb Z)$ is free when generic is defined appropriately, and yet in most concrete examples investigated thus far, one does not have freeness).

Note that similar questions regarding in particular subgroups of $\SLn(\mathbb Z)$ generated by random elements of {\it combinatorial} height at most $X$ -- i.e. elements obtained by a random walk of length $X$ on the Cayley graph $\textrm{Cay}(\SLn(\mathbb Z), S)$ where $S$ is a fixed finite set of generators of $\SLn(\mathbb Z)$ -- have been previously addressed by Rivin in \cite{R1,R3} and by Aoun in \cite{Aoun}.  In fact, Rivin considers a much broader family of lattices in semisimple Lie groups beyond $\SLn(\mathbb Z)$, and Aoun's results apply also to finitely generated non-virtually solvable subgroups of $\textrm{GL}(V)$ where $V$ is a finite dimensional vector space over an arbitrary local field $K$.

Aoun's result in the context of $\textrm{SL}_n(\mathbb Z)$ that any two independent random walks on $\SLn(\mathbb Z)$ generate a free group (we give a short proof of this here as Theorem \ref{aounthm}), implies that with this combinatorial definition of genericity, a finitely generated subgroup of $\SLn(\ZZ)$ is generically of infinite index in $\SLn(\ZZ)$ if $n\geq 3$ (Aoun shows this by proving that any two such random walks will yield a ping-pong pair; see Section~\ref{pingpongsection} for a definition of ping-pong, and Theorem \ref{aounthm} for a proof of a version of Aoun's result).  Combining this with Rivin's result in \cite{R1} that in the combinatorial model a generic finitely generated subgroup of $\SLn(\ZZ)$ is Zariski dense in $\SLn(\mathbb C)$, we have that thinness is generic in the combinatorial setup.  

It is hence perhaps reasonable to expect that thinness should be generic in the following Euclidean model.  Let $G=\textrm{SL}_n(\mathbb Z)$, and let $B_X$ denote the set of all elements in $G$ of norm at most $X$, where norm is defined as
\begin{equation}\label{height}
||\gamma||^2:= \lambda_{\textrm{max}}(\gamma^{t}\gamma)
\end{equation}
where $\lambda_{\max}$ denotes the largest eigenvalue.  Our task in this paper is to choose two elements $g_1,g_2$ uniformly at random from $B_X$ and to consider
$$\lim_{X\rightarrow\infty}\mu_X(\{g=(g_1,g_2)\in G^2\; |\; \Gamma(g) \mbox{ is of infinite index in } G\})$$
where $\Gamma(g)=\langle g_1,g_1^{-1},g_2,g_2^{-1}\rangle$, and $\mu_X$ is the measure on $G\times G$ induced by the normalized counting measure on $B_X^2$.  If the above limit is $1$, we say that the generic subgroup of $G$ generated by two elements is infinite index in $G$.  In general, we say that the generic subgroup of $G$ generated by two elements has some property $P$ if
\begin{equation*}
\lim_{X\rightarrow\infty}\mu_X(\{g=(g_1,g_2)\in G^2\; |\; \Gamma(g) \mbox{ has property $P$}\})=1.
\end{equation*}
It is a result of the second author (\cite{R2}) that in this model, two randomly chosen elements \emph{do} generate a Zariski-dense subgroup, and 
might expect that, just as in the combinatorial setting, two randomly chosen elements of $G$ in the Euclidean model will also form a ping-pong pair with probability tending to $1$ (i.e. that the generic $2$-generator subgroup of $G$ is generated by a ping-pong pair in particular).   Surprisingly, we use Breuillard-Gelander's \cite{BG} characterization of ping-pong for $\SLn(\mathbb R)$ over projective space to show that while this is the case for $n=2$, it is not true if $n>2$ (see Theorem~\ref{toptwoth}), and so further work must be done to prove that thinness is generic in this model, if it is in fact the case.

However, if one ``symmetrizes" the ball of radius $X$ in a natural way, by imposing a norm bound on both the matrix and its inverse, we show that two elements chosen at random in such a modified model \emph{will,} in fact, be a ping-pong pair over a suitable space with probability tending to $1$, and this enables us to show that, in this modified setup, the generic subgroup of $\textrm{SL}_n(\ZZ)$ generated by two elements is thin (our methods extend to any arbitrary finite number of generators in a straightforward way, as well).  This modified Euclidean model is identical to the one described above, but $B_X$ will be replaced by
\begin{equation}\label{modball}
B_X'(G):=\{g\in G\;|\; g,g^{-1}\in B_X\},
\end{equation}
and the measure $\mu_X$ is replaced by $\mu'_X$, the normalized counting measure on $(B_X')^2$.    With this notation, we show the following.

\begin{thm}\label{thinsln}
Let $G=\mathrm{SL}_n(\ZZ)$ where $n\geq 2$, and let $B_X'(G)$ and $\mu_X'$ be as above.  Then we have
\begin{equation*}
\lim_{X\rightarrow\infty}\mu'_X(\{(g_1,g_2)\in (B_X'(G))^2\; |\; \langle g_1,g_2\rangle \mbox{\emph{ is thin}}\})=1
\end{equation*}
\end{thm}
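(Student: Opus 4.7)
The plan has three parts: a shape estimate for a random $g \in B_X'(G)$, a ping-pong verification on an appropriate partial flag variety via the Breuillard--Gelander criterion \cite{BG}, and a passage from freeness to thinness.

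For the shape estimate, I would use the Cartan decomposition $g = k_1 a k_2$ with $a = \mathrm{diag}(a_1, \dots, a_n)$, $a_1 \geq \cdots \geq a_n > 0$, $\prod a_i = 1$, so that $g \in B_X'(G)$ is equivalent to $a_1 \leq X$ and $a_n \geq 1/X$. The density in the Haar $KAK$-decomposition is $\prod_{i<j} \sinh(x_i - x_j)$ with $x_i = \log a_i$, which in the chamber asymptotically equals $2^{-\binom{n}{2}}\exp(2\rho \cdot x)$. On the truncated region this functional is maximized on the extremal face where $x_i = \log X$ for $i \leq m := \lfloor n/2 \rfloor$ and $x_i = -\log X$ for $i \geq \lceil n/2 \rceil + 1$ (with $x_{(n+1)/2} = 0$ in the odd case). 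Laplace asymptotics then concentrate the measure in an $O(1)$-neighborhood of this face, so a random $g \in B_X'(G)$ has $a_i \asymp X$ for $i \leq m$, $a_i \asymp X^{-1}$ for $i > \lceil n/2 \rceil$, and crucially a singular gap $a_m / a_{m+1} \geq X^c$ for some fixed $c = c(n) > 0$ with probability tending to $1$. The transfer from Haar to integer counting in $B_X'(G)$ is handled by Eskin--McMullen / Duke--Rudnick--Sarnak equidistribution for bi-$K$-invariant regions.

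For the ping-pong step, the singular gap at position $m$ points us to the Grassmannian $Y = \mathrm{Gr}(m, n)$ (equivalently, $G/P$ for the appropriate maximal parabolic). On $Y$, $g$ has an attracting $m$-plane $V_g^+ = k_1 \cdot \langle e_1, \dots, e_m \rangle$ and a Schubert divisor $D_g^- \subset Y$ consisting of $m$-planes meeting $k_2^{-1}\langle e_{m+1}, \dots, e_n \rangle$ nontrivially; $g$ contracts the complement of any neighborhood of $D_g^-$ into a neighborhood of $V_g^+$ at rate $\leq X^{-c}$. By Breuillard--Gelander \cite{BG}, $(g_1, g_2)$ is a ping-pong pair on $Y$ as soon as the four attracting planes attached to $g_i^{\pm 1}$ are uniformly transverse, at a scale polynomial in the gap, to the four Schubert divisors attached to $g_j^{\pm 1}$. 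Since the $K$-factors of random elements of $B_X'(G)$ equidistribute on $K$ by Step~1 combined with bi-$K$-invariance of the truncation, each transversality condition fails with probability $O(X^{-c'})$; a union bound over the finitely many pairwise conditions yields ping-pong, and hence $\Gamma(g) \cong F_2$, with probability tending to $1$.

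Freeness is upgraded to thinness using the Zariski-density result of \cite{R2}. For $n \geq 3$, $\SLn(\ZZ)$ has Kazhdan's property $(T)$, inherited by finite-index subgroups; since $F_2$ does not have $(T)$, $\Gamma(g)$ must be of infinite index in $\SLn(\ZZ)$ and hence thin. For $n = 2$, the ping-pong subgroup acts on $\mathbb{H}^2$ as a Schottky subgroup whose limit set is a Cantor set of Hausdorff dimension strictly less than $1$, whereas any finite-index subgroup of $\SLt(\ZZ)$ is itself a lattice with limit set all of $\mathbb{R}\mathbb{P}^1$; so $\Gamma(g)$ is once more of infinite index and thin. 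The main obstacle in this plan is the first ingredient: an \emph{effective} concentration of $B_X'(G)$ on the extremal Weyl-chamber face, with a power-saving error in the singular gap sufficient to close the union bound in the ping-pong step. A leading-order count of $|B_X'(G)|$ alone does not suffice; one needs a quantitative DRS / Eskin--McMullen style equidistribution along singular-profile sub-regions of the chamber.
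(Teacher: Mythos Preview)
Your three-part plan---singular gap at the middle index $m=\lfloor n/2\rfloor$, ping-pong on $\mathbb{P}(\bigwedge^m \mathbb{R}^n)$ (equivalently $\mathrm{Gr}(m,n)$ via Pl\"ucker) through the Breuillard--Gelander criterion, and then freeness $\Rightarrow$ thinness via property~$(T)$ (for $n\geq 3$) plus Zariski density from \cite{R2}---is exactly the route the paper takes. The paper also treats $n=2$ separately by a Schottky/Hausdorff-dimension argument, as you do; note that $B_X'=B_X$ when $n=2$ since the two singular values are reciprocal.

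Where you diverge from the paper is in the \emph{strength} of the shape estimate, and this is precisely the point you flag as the ``main obstacle''. You ask for a power gap $a_m/a_{m+1}\geq X^c$ so that both the contraction scale and the transversality-failure probability decay polynomially in $X$ and a single union bound closes the argument. The paper does \emph{not} do this, and does not need to. It proves only the qualitative statement that for each fixed $\eta$, $\mu_X'(a_m/a_{m+1}<\eta)\to 0$ as $X\to\infty$: a direct Haar-measure computation shows the gap-violating region contributes $O(X^{n^2/2-1})$ against a main term $\asymp X^{n^2/2}$ (even case), with well-roundedness of both the symmetrized ball and the gap-constrained region checked separately in order to invoke \cite{EM} and pass to $\mathbb{Z}$-points. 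The ping-pong step is then run as a two-parameter limit: fix $\epsilon'>0$; as $X\to\infty$ the probability of $\epsilon'$-very-contraction tends to $1$, while the probability that some attracting point lies within $2\epsilon'$ of some repulsive hyperplane is $O(\epsilon')$ by sector equidistribution of the $K$-factors (the paper invokes \cite{GO} here). Hence $\limsup_X \mu_X'(\text{not ping-pong})\leq C\epsilon'$ for every $\epsilon'$, so the limit is $0$. No effective or power-saving rate is required anywhere, and the quantitative equidistribution you worry about is unnecessary. Your obstacle is a phantom created by aiming for a one-shot effective bound rather than the diagonal argument.

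One technical point you pass over: the transfer from Haar volume to $\mathbb{Z}$-point count is not automatic for the gap-constrained regions $\{a_m/a_{m+1}\geq\eta\}\cap B_X'$, which are bi-$K$-invariant but not norm balls. The paper devotes a separate section to verifying that these form a well-rounded sequence in the sense of \cite{EM}.
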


The key tool in all of the $n>2$ cases is Section~4 of \cite{BG}.  We remark that it is very natural to consider the region $B_X'$, rather than the usual ball $B_X$: it is in fact a more suitable analog of Aoun's combinatorial setup, in which an element of combinatorial height $X$ has inverse whose combinatorial height is also $X$ (unlike the Euclidean ball model, in which an element of norm $X$ can have inverse of much larger norm).

It should be remarked that choosing the symmetrized ball is morally similar to choosing the condition number of $\gamma$ as the measure of size. Recall that the condition number of a matrix $A$ is the ratio of the largest to the smallest singular value, which is, in our notation, equal to $\|A\| \|A^{-1}\|.$ 

We also address the question of ``How thin is thin?'' Namely, in the special case of $\textrm{SL}_2$ we show that as we pick our pairs of elements out of bigger and bigger balls, the Hausdorff dimension of the limit set of the subgroup they generate becomes smaller and smaller. In higher rank, the Hausdorff dimension is less tractable, but what can be shown  is that the top \emph{Lyapunov exponent} of the random pair becomes larger and larger (it is well-known that the top Lyapunov exponent and the Hausdorff dimension are, essentially, the inverses of one another).

Our paper is organized as follows.  In Section~\ref{sl2sec} we prove that the generic subgroup of $\SLt(\ZZ)$ is free and is of arbitrarily small Hausdorff dimension, which together show that the generic subgroup of $\SLt(\ZZ)$ is thin.  In Section~\ref{sln}, we consider subgroups of $\SLn$ where $n>2$.  In Section~\ref{epscontract} we show that two randomly chosen elements of $\SLn(\ZZ)$ will be a ping-pong pair with probability tending to $0<\alpha<1$ in the Euclidean model described above.  In Section~\ref{modified} we show that the generic subgroup of $\textrm{SL}_n(\mathbb Z)$ is thin in the modified Euclidean model.  This is done by showing that two randomly chosen elements of $\SLn(\ZZ)$ will be a ping-pong pair in a suitable space $\mathbb P(\bigwedge^k(\mathbb R^n))$ with probability tending to $1$ (in fact, for $n=3$ one sets $k=1$ and so the space is simply $\mathbb P(\mathbb R^n)$).  For this, we use results from \cite{BG},\cite{EM}, and \cite{GO}.  In Section~\ref{wellroundedsec} we show the well-roundedness of certain sequences of sets which is necessary in applying the results from \cite{EM} and \cite{GO} in the preceding section. In Section \ref{lyapsec} we discuss the Lyapunov exponents, and in Section \ref{future} we discuss some open questions.

{\bf Acknowledgements:} The question of whether thin is generic was first posed to the authors by Peter Sarnak, and we are very grateful to him for many insightful conversations and correspondences on the subject.  We also thank Hee Oh, Alireza Salehi-Golsefidy, Emmanuel Breuillard, Giulio Tiozzo and Curt McMullen for several helpful conversations.  Finally, this work began during our stay at the Institute for Advanced Study in Princeton, and we are grateful for the institute's hospitality.

\section{Subgroups of $\textrm{SL}_2(\ZZ)$}\label{sl2sec}

In this section, we prove the following.

\begin{thm}\label{sl2}
Let $G=\mathrm{SL}_2(\ZZ)$, and let $\Gamma(g)$ and $\mu_X$ be as above.  Then we have
\begin{equation*}
\lim_{X\rightarrow\infty}\mu_X(\{g=(g_1,g_2)\in G^2\; |\; \Gamma(g) \mbox{\emph{ is thin}}\})=1
\end{equation*}
\end{thm}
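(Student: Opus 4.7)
The argument naturally splits into two pieces: a ping-pong argument producing a rank-two free subgroup with probability tending to $1$, and a Hausdorff dimension estimate that forces this free subgroup to have infinite index in $\SLt(\ZZ)$. (Note that freeness alone is insufficient, since $\SLt(\ZZ)$ has finite index free subgroups such as $\Gamma(2)$; it is the dimension of the limit set that ultimately detects thinness.)

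First I would use the Cartan $KAK$-decomposition $g = k_\theta\, a_s\, k_\varphi$ with $a_s = \mathrm{diag}(s, 1/s)$ and $s = \|g\|$. Classical lattice-point equidistribution for $\SLt(\ZZ) \cap B_X$ (in the style of Selberg / Good, or equivalently Eskin--McMullen) implies that, for $g$ sampled uniformly from $B_X$, the law of the $K$-factors $(k_\theta, k_\varphi)$ tends to Haar measure on $K \times K$ as $X \to \infty$, while the radial parameter $s$ concentrates near $X$. To leading order, the attracting fixed point of $g$ on $\mathbb{RP}^1$ is $k_\theta \cdot [1:0]$ and the repelling one is $k_\varphi^{-1}\cdot [0:1]$, and the contraction of $g$ towards its attractor is of order $s^{-2} \asymp X^{-2}$.

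For two independent uniform samples $g_1, g_2 \in B_X$, Haar-equidistribution of the Cartan angles guarantees that, with probability tending to $1$, the four fixed points are pairwise separated on $\mathbb{RP}^1$ by a gap $\eta_X$ decaying much more slowly than $X^{-2}$ (one can take, e.g., $\eta_X = X^{-1/2}$). Choosing disjoint intervals of radius $\eta_X$ around the four fixed points, the contraction factor $\asymp X^{-2} \ll \eta_X$ makes each $g_i^{\pm 1}$ map the complement of its repelling interval strictly into its attracting interval, so the ping-pong lemma yields $\langle g_1, g_2\rangle \cong F_2$. The very same data also bounds the Hausdorff dimension of the limit set $\Lambda$ of the resulting Schottky group: since the four generators contract the ping-pong intervals by factors $O(X^{-2})$, a standard covering (Bowen--Ruelle pressure) estimate gives
\begin{equation*}
\dim_H \Lambda \;\leq\; \frac{\log 4}{2\log X}\bigl(1+o(1)\bigr),
\end{equation*}
which is strictly less than $1$ once $X$ is large. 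Any finite-covolume Fuchsian subgroup of $\SLt(\ZZ)$ has limit set all of $\mathbb{RP}^1$, hence Hausdorff dimension $1$; consequently $\langle g_1,g_2\rangle$ has infinite covolume in $\mathbb{H}^2$, and therefore infinite index in $\SLt(\ZZ)$, i.e.\ is thin.

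The main obstacle is the quantitative joint equidistribution required in the ping-pong step: one must ensure that, simultaneously for the two independent samples, no pair of the four fixed points collides inside a window of size $\eta_X \gg X^{-2}$. Any polynomial power-saving in the equidistribution of Cartan $K$-factors of $\SLt(\ZZ)\cap B_X$ suffices, but care is needed so that the error terms survive after intersecting the two independent equidistribution statements, and so that the ``exceptional'' pairs for which a fixed point is anomalously close to the boundary of the ping-pong interval are controlled. The Hausdorff dimension bound itself is then a routine estimate from the controlled ping-pong pair.
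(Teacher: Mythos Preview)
Your strategy matches the paper's: prove that $(g_1,g_2)$ is generically a Schottky pair acting on $\partial\mathbb H$, then bound the Hausdorff dimension of the limit set strictly below $1$ (the paper quotes Beardon's inequality $\delta\le -\log 3/(2\log\lambda)$, which is exactly your covering/pressure estimate) to deduce infinite index. The only substantive difference is the equidistribution input used to separate the four fixed points: the paper routes this through the correspondence between hyperbolic elements and closed geodesics on the modular surface, invoking equidistribution of long closed geodesics (Duke--Rudnick--Sarnak, Eskin--McMullen), whereas you argue directly via equidistribution of the Cartan $K$-factors on $\mathbb P^1(\mathbb R)$. These are interchangeable, and your formulation is in fact what the paper itself uses in the $n>2$ case.

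The ``main obstacle'' you flag is unnecessary. Rather than a shrinking window $\eta_X$ (which would indeed demand an effective rate), run a diagonal argument with a \emph{fixed} $\eta>0$: plain equidistribution already gives
\[
\limsup_{X\to\infty}\mu_X\bigl(\text{some pair of the four fixed points within }\eta\bigr)\le C\eta,
\]
while $\mu_X\bigl(\text{all four contraction factors}<\eta\cdot 3^{-1/\epsilon}\bigr)\to 1$ for every fixed $\epsilon>0$, since the contractions are $\asymp\|g_i\|^{-2}$ and tend to $0$ in probability. On the intersection of these two events the Beardon bound yields $\delta(\Gamma)<\epsilon$, so $\liminf_X\mu_X(\delta<\epsilon)\ge 1-C\eta$; now send $\eta\to 0$. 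No power-saving error term is needed anywhere. Finally, Zariski density (which you do not address explicitly) comes for free once $\Gamma\cong F_2$, since a non-virtually-solvable subgroup of $\SLt$ has Zariski closure all of $\SLt$; the paper instead cites a general Zariski-density result of the second author.
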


In this case, we are able to prove generic thinness even in the usual Euclidean ball model, which we do next.  We separate the $2$-dimensional case from the other cases for several reasons: one is that in this case we have the very natural action of $\textrm{SL}_2$ on the upper half plane to work with, and two is that the general strategy is the same as in the higher-dimensional cases yet more straightforward.  The idea is to show generic freeness using a ping-pong argument which follows essentially from studying the generators' singular values (i.e. eigenvalues of $g_i^tg_i$) and using equidistribution results.

Specifically, Theorem~\ref{sl2} will follow from three lemmas which we prove below. Lemma~\ref{disjointcircles} will imply Lemma~\ref{sl2schottky}, that the generic $\Gamma(g)$ is free.  In fact, it is {\it Schottky} -- namely, one expects that there exist four disjoint and mutually external circles $C_1,C_2,C_3, C_{4}$ in $\mathbb H$ such that for $1\leq i\leq 2$ the generator $g_i$ of $\Gamma$ maps the exterior of $C_i$ onto the interior of $C_{i+2}$, and the generator $g_i^{-1}$ of $\Gamma$ maps the exterior of $C_{i+2}$ onto the interior of $C_i$.  We then use Lemma~\ref{sl2schottky} to show that generically the limit set of $\Gamma$ acting on $\mathbb H$ has arbitrarily small Hausdorff dimension in Lemma~\ref{haussl2}, which immediately implies Theorem~\ref{sl2}, and in fact shows that the generic subgroup generated by two elements in $\SLt(\ZZ)$ is ``arbitrarily thin" (Hausdorff dimension is a natural measure of thinness: if it is not $1$, then it is thin, and the smaller the Hausdorff dimension, the thinner the group).

Let
\begin{equation*}
B_X:=\{\gamma\in \SLt(\ZZ)\; |\; ||\gamma||\leq X\}
\end{equation*}
and note that $|B_X|\sim c\cdot X^2$ for some constant $c$ (see \cite{DRS} and \cite{D}).  Also, for a fixed $T>0$ we have
\begin{equation*}
|\{\gamma\in B_X\; {\mbox{s.t. }}\; |\textrm{tr}(\gamma)|<T\}|\ll_{\epsilon}T\cdot X^{1+\epsilon}
\end{equation*}
so that for any $T$
\begin{equation}\label{tracelarge}
\lim_{X\rightarrow\infty}\mu_X(\{\gamma\in G\; | \; |\textrm{tr}(\gamma)|>T\})=1
\end{equation}
In particular, we have $\lim_{X\rightarrow\infty}\mu_X(\{\gamma\in G\; | \; \textrm{tr}(\gamma)>2\})=1$.  So, with probability tending to $1$, each of the generators $g_i$ of $\Gamma(g)$ will have two fixed points on the boundary $S$ of $\mathbb H$.  Write
\begin{equation}
g_i=\left(\begin{array}{ll}
           a_i & b_i\\
c_i & d_i\\
          \end{array}\right)
\end{equation}
Then the fixed points of $g_i$ are
\begin{equation*}
\frac{(d_i-a_i)\pm\sqrt{(a_i+d_i)^2-4}}{2c_i}
\end{equation*}
which, since the trace of $g_i$ is large, approaches $(d_i-a_i\pm (a_i + d_i))/(2c_i) = d_i/c_i, \; -a_i/c_i$ as $X$ goes to infinity.  One of these points is attracting -- call it $\alpha_i$ -- and the other is repelling -- call it $\beta_i$. The distance between these points is $|(a_i+d_i)/c_i|$, which is large with high probability.  Furthermore, there exists a circle $C_i$ containing $\alpha_i$ and a circle $C_{i+2}$ containing $\beta_i$, both of radius $1/|c_i|$, (the isometric circles) such that $g_i$ maps the exterior of $C_i$ onto the interior of $C_{i+2}$ and $g_i^{-1}$ maps the exterior of $C_{i+2}$ onto the interior of $C_i$.  Note that as $X$ tends to infinity, the probability that the radii of these circles are small is large, and that generically $C_i$ and $C_{i+2}$ are disjoint since $g_i$ is hyperbolic with high probability.

So far we have selected $g_1$ uniformly at random out of a ball of radius $X$.  As discussed above, as $X\rightarrow\infty$, the probability that $g_1$ is hyperbolic of large trace tends to $1$.  Now we select a second element $g_2$ uniformly at random out of a ball of radius $X$, also of large trace with probability tending to $1$.  Let $C_1,C_3,C_2,C_4$ be defined as above.  We would like to show that as $X\rightarrow\infty$, the probability that the circles $C_1,C_2,C_3,C_4$ are mutually external and disjoint (i.e. that $g_1$ and $g_2$ form a Schottky pair) tends to $1$.  Let $r(C_i)$ denote the radius of $C_i$.  Note that for any $\epsilon>0$, we have
$$\lim_{X\rightarrow\infty}\mu_X(\{(g_1,g_2)\in G^2\; | \; \max_i(r(C_i))<\epsilon\})=1$$
so our desired statement about the disjointness of the circles $C_i$ will follow from the following lemma.
\begin{lemma}\label{disjointcircles} For any pair $(g_1,g_2)\in G^2$ such that $|\textrm{tr}(g_i)|>2$ for $i=1,2$, let $\alpha_i$ and $\beta_i$ denote the fixed points of $g_i$. Let $$d_{\min}(g_1,g_2):= \min(d(\alpha_1,\beta_2),d(\alpha_1,\alpha_2),d(\beta_1,\beta_2),d(\beta_1,\alpha_2)),$$
where $d(\cdot,\cdot)$ denotes hyperbolic distance.  Then for any $r>0$ we have 
\begin{equation}\label{wantshow}
\lim_{X\rightarrow\infty}\mu_X(\{(g_1,g_2)\in G^2\; | \; |\textrm{tr}(g_i)|>2, \,d_{\min}(g_1,g_2)>r\})=1.
\end{equation}
\end{lemma}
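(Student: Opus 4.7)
The strategy proceeds in three stages: reduce to a statement about slopes, establish their equidistribution, and combine this with the shrinking isometric-circle radii. For the first stage, by (\ref{tracelarge}) I may restrict to the event $|\textrm{tr}(g_i)|>T$ for $i=1,2$, with $T=T(X)\to\infty$ slowly, since the complement has $\mu_X$-measure $o(1)$. On this event, the explicit fixed-point formula gives
$$
\alpha_i \;=\; \frac{d_i}{c_i} + O\!\left(\frac{1}{|c_i|\,T}\right), \qquad \beta_i \;=\; -\frac{a_i}{c_i} + O\!\left(\frac{1}{|c_i|\,T}\right),
$$
so it suffices to control the joint distribution of the four slopes $\{d_i/c_i,\,-a_i/c_i\}_{i=1,2}$, up to errors vanishing with $T$.

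For the second stage, I would invoke the equidistribution of $\SLt(\ZZ)$-lattice points in the Euclidean ball $B_X$ (Duke--Rudnick--Sarnak / Eskin--McMullen, using the well-roundedness of the family $\{B_X\}$).  This gives weak convergence of the normalized counting measure on $B_X\cap\SLt(\ZZ)$ to the normalized Haar measure on $B_X\subset\SLt(\mathbb R)$.  Since the slope map $g\mapsto (d/c,\,-a/c)$ is rational in the entries, continuous off the codimension-one locus $\{c=0\}$ whose lattice-point count is negligible ($O(X^{1+\epsilon})$ versus $|B_X|\sim X^2$), the distribution of the slopes of a uniformly random $g\in B_X\cap\SLt(\ZZ)$ tends to an absolutely continuous measure $\nu$ on $\partial\mathbb H$.

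For the third stage, I would apply this with $g_1, g_2$ chosen independently from $B_X$.  By Fubini the joint law of all four fixed points tends to $\nu\times\nu$, which has a continuous density; hence for any fixed $\epsilon>0$ the chance that two cross-matrix fixed points lie within Euclidean distance $\epsilon$ on $\partial\mathbb H$ is at most $O(\epsilon)+o_{X\to\infty}(1)$.  Coupling this with the already-noted fact that $|c_i|\to\infty$ in probability -- so the isometric-circle radii $1/|c_i|$ shrink to $0$ -- the hyperbolic distance between the isometric circles attached to distinct cross-matrix fixed points, which is asymptotically $\log\bigl(|\alpha-\beta|^2\,|c_1c_2|\bigr)$, tends to infinity.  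In particular $d_{\min}(g_1,g_2)>r$ holds with probability tending to $1$ for any fixed $r>0$.

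The main obstacle I expect is keeping the slope equidistribution uniform near the singularity $\{c=0\}$: one truncates to $\{|c|>Y(X)\}$ using the bound $\#\{g\in B_X:|c|\le Y\}\ll XY$ and verifies that the complementary contribution is negligible.  The remainder is routine bookkeeping around the integer constraint $ad-bc=1$ and the standard lattice-counting asymptotics for Euclidean balls in $\SLt$.
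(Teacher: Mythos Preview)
Your approach is sound and takes a genuinely different route from the paper's proof. The paper associates to each hyperbolic $g_i$ the closed geodesic it determines on the modular surface $\SLt(\ZZ)\backslash\mathbb H$, invokes the equidistribution of long closed geodesics there (Duke--Rudnick--Sarnak, Eskin--McMullen), and reads off the equidistribution of the endpoint pairs from that. You instead stay in the group: you approximate the fixed points explicitly by the entry-ratios $d/c,\,-a/c$ and push forward the lattice-point equidistribution in $B_X$ through this rational map to obtain an absolutely continuous limit law for the fixed-point pair (in the $KAK$ picture this is nothing but the pair of independent Cauchy variables coming from sector equidistribution of the two $K$-factors). Your argument is more elementary and more direct --- it avoids the passage to the quotient and the somewhat implicit step from ``closed geodesics equidistribute on the surface'' to ``endpoints of axes equidistribute on $\partial\mathbb H$'' --- and it is much closer in spirit to what the paper itself does in higher rank in Section~\ref{sln}, where sector equidistribution of the $K$-components plays exactly the role your slope map plays here. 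The paper's route, by contrast, is more geometric and ties the statement to a classical theorem about the modular surface.

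One remark on your third stage. Since the fixed points have a genuine (non-degenerate) limiting distribution on $\partial\mathbb H$ --- they do not spread out --- the \emph{Euclidean} distance between two cross-matrix fixed points does not exceed a fixed $r$ with probability tending to $1$; as you correctly note, there is a residual $O(r)$ defect. What does go to infinity in probability is the hyperbolic distance between the corresponding \emph{isometric circles}, via your estimate $\log(|\alpha-\beta|^2|c_1c_2|)$, and this is the quantity that actually drives the Schottky and Hausdorff-dimension conclusions downstream. In effect you are reading the lemma's ``hyperbolic distance between fixed points'' as the hyperbolic distance between the isometric circles centred at those boundary points --- the interpretation under which the statement is both nontrivial and true --- and your argument establishes exactly that.
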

\begin{proof}
From (\ref{tracelarge}) we have that the probability that the traces of $g_i$ are different and greater than $2$ tends to $1$.  We may therefore restrict to nonconjugate hyperbolic pairs $(g_1,g_2)$ in proving Lemma~\ref{disjointcircles}.  Specifically, it suffices to show that
\begin{equation}\label{wantshow2}
\lim_{X\rightarrow\infty}\mu_X(\{(g_1,g_2)\in G^2\; | \; \textrm{tr}(g_1)\not=\textrm{tr}(g_2),\, d_{\min}(g_1,g_2)>r\})=1
\end{equation}
Note now that to every pair of nonconjugate elements $g_1,g_2\in G$ one can associate a unique pair of distinct closed geodesics $L_1,L_2$ on $G\backslash\mathbb H$ fixed by $g_1$ and $g_2$ respectively.  Furthermore, the length $\ell(L_i)$ is
$$\ell(L_i)=\left((|\textrm{tr}(g_i)|+\sqrt{\textrm{tr}^2(g_i)-4})/2\right)^2$$
for $i=1,2$.  Therefore our measure $\mu_X$ on $G$ also induces a measure on the set $S'$ of closed geodesics on $G\backslash\mathbb H$, and we have that for a fixed $T>0$
$$\lim_{X\rightarrow\infty}\mu_X(\{L\in S' \;|\; \ell(L)>T)=1$$
from (\ref{tracelarge}).  With this in mind, (\ref{wantshow2}) follows from the equidistribution of long closed geodesics\footnote[1]{The equidistribution of long closed geodesics was first proven spectrally by Duke-Rudnick-Sarnak in \cite{DRS} and ergodic-theoretically by Eskin-McMullen in \cite{EM} shortly after.} which we summarize below from \cite{sardav}.

To each closed geodesic $L$ on $G\backslash\mathbb H$ one associates the measure $\nu_{L}$ on $G\backslash\mathbb H$ which is arc length supported on $L$.  Let $\nu=\frac{3}{\pi}\cdot\frac{dx\,dy}{y^2}$.  For a finite set $S$ of closed geodesics, let $\ell(S)=\sum_{L\in S} \ell(L)$ where $\ell(L)$ is the length of $L$, and define the measure $\nu_S$ on $G\backslash\mathbb H$ by
$$\nu_S=\frac{1}{\ell(S)}\cdot\sum_{L\in S}\nu_L$$
Let $S'$ be the set of all closed geodesics on $G\backslash\mathbb H$, and let $S'(t)=\{L\in S'\; | \; \ell(L)<t\}$.  Then we have (see \cite{DRS} or \cite{EM}) that as $t\rightarrow\infty$ the measures $\nu_{S'(t)}\rightarrow\nu$.  Therefore, given the association of pairs $(g_1,g_2)$ in (\ref{wantshow2}) with pairs of closed geodesics, we have that the pairs of fixed points $(\alpha_i,\beta_i)$ of $g_i$ chosen uniformly at random from a ball of radius $X$ also equidistribute as $X\rightarrow\infty$ as desired.
\end{proof}

Lemma~\ref{disjointcircles} implies that generically the isometric circles associated to $g_1$ and $g_2$ are disjoint.  This is precisely what is needed for $\Gamma$ to be Schottky, and so the following lemma is immediate.

\begin{lemma}\label{sl2schottky}
Let $G=\mathrm{SL}_2(\ZZ)$, and let $\Gamma(g)$ and $\mu_X$ be as before.  Then we have
\begin{equation*}
\lim_{X\rightarrow\infty} \mu_X(\{g\in G^k \mbox{ s.t. } \Gamma(g) \mbox{\emph{ is Schottky}}\})=1
\end{equation*}
\end{lemma}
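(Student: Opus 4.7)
The plan is to combine two facts already established in the discussion preceding Lemma~\ref{sl2schottky}, both of which individually hold with probability tending to $1$: the Euclidean radii $r(C_i)=1/|c_i|$ of the four isometric circles shrink to $0$, and, by Lemma~\ref{disjointcircles}, the four fixed points $\alpha_1,\beta_1,\alpha_2,\beta_2$ of $g_1,g_2$ are pairwise far apart on $\partial\mathbb H$.

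First I would observe that the isometric circles already carry the correct ping-pong dynamics for $\Gamma(g)=\langle g_1,g_2\rangle$: by construction $g_i$ maps the exterior of $C_i$ onto the interior of $C_{i+2}$ for $i=1,2$, and $g_i^{-1}$ acts in the opposite direction. Hence, to conclude that $\Gamma(g)$ is Schottky in the sense described above, it suffices to show that the four circles $C_1,C_2,C_3,C_4$ are pairwise disjoint and mutually external with probability tending to $1$.

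Next, I would fix $r>0$, choose $\epsilon\ll r$, pick $T$ large, and work on the intersection of the three events $\{\max_i r(C_i)<\epsilon\}$, $\{d_{\min}(g_1,g_2)>r\}$, and $\{|\textrm{tr}(g_i)|>T\text{ for }i=1,2\}$. On this intersection each circle $C_i$ is contained in a Euclidean $\epsilon$-neighborhood of its corresponding fixed point (the center of $C_i$ lies within $O(1/T)$ of that point, by the explicit formula for the fixed points of a hyperbolic element), while any two distinct fixed points are separated by more than $r$; choosing $\epsilon,1/T$ small enough compared to $r$ forces the four circles to be pairwise disjoint and mutually external, i.e.\ forces $\Gamma(g)$ to be Schottky. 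By the remark on shrinking radii preceding Lemma~\ref{disjointcircles}, by \eqref{tracelarge}, and by Lemma~\ref{disjointcircles} itself, the $\mu_X$-probability of this intersection tends to $1$, and letting first $X\to\infty$ and then $r\to 0$, $\epsilon\to 0$ appropriately gives the claim, with the same argument working verbatim in the $k$-generator case.

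The only mild technical issue is to reconcile the notion of distance in Lemma~\ref{disjointcircles} with the Euclidean separation on $\partial\mathbb H$ that actually controls the isometric circles. This is handled by restricting, after discarding a subset of $\mu_X$-measure tending to $0$, to pairs whose fixed points all lie in a fixed compact subset of $\partial\mathbb H$, where the two notions of distance are bi-Lipschitz comparable; the equidistribution of long closed geodesics invoked in the proof of Lemma~\ref{disjointcircles} already controls the distribution of the fixed points themselves, so this restriction is harmless. Everything else is bookkeeping.
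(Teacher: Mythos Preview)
Your proposal is correct and follows essentially the same approach as the paper: the paper treats Lemma~\ref{sl2schottky} as an immediate consequence of Lemma~\ref{disjointcircles} together with the preceding observations that the isometric-circle radii shrink and that each $g_i$ is hyperbolic with high probability, and you have simply spelled this deduction out in detail. Your remark about reconciling the metric in Lemma~\ref{disjointcircles} with Euclidean separation on $\partial\mathbb H$ is a point the paper glosses over, so if anything you are being more careful than the original.
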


In other words, as $X\rightarrow\infty$, the picture of the fixed points and isometric circles of $g_1$ and $g_2$ is generically as in figure~\ref{pingpong}: namely, the isometric circles are disjoint.  Since the generic group $\Gamma$ generated by $g_1$ and $g_2$ is Schottky, it is in fact free.  

\begin{figure}[htp]
\centering
\includegraphics[height = 50 mm]{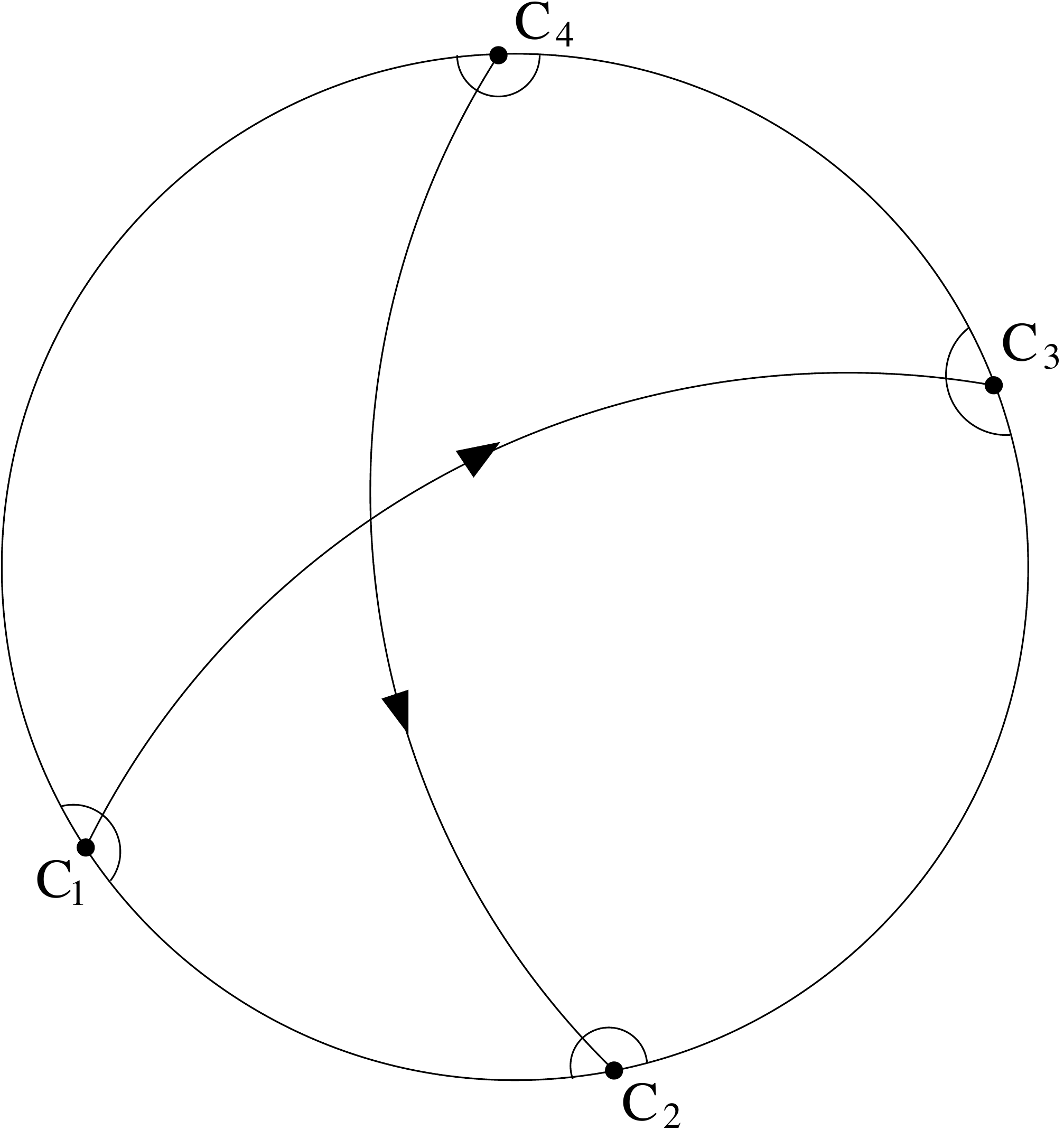}
\caption{The generic picture of a 2-generator subgroup of $\SLt(\ZZ)$ acting on $\mathbb H$.}\label{pingpong}
\end{figure}

To show that $\Gamma$ will be infinite index in $G$ with high probability, we will use the fact that it is Schottky with high probability to obtain upper bounds on the Hausdorff dimension of its limit set, i.e. the critical exponent of 
$$\sum_{\gamma\in\Gamma}e^{-\delta\cdot d(x,\gamma y)},$$
showing that it is arbitrarily small with high probability.
\begin{lemma}\label{haussl2}
Let $g_i$, $\Gamma=\Gamma(g)$ and $\mu_X$ be as above, and let $\delta(\Gamma)$ denote the Hausdorff dimension of the limit set of $\Gamma$.  Then for every $\epsilon>0$ we have
\begin{equation*}
\lim_{X\rightarrow\infty}\mu_X(\{g=(g_1,g_2) \mbox{ s.t. } 0<\delta(\Gamma)<\epsilon\})=1
\end{equation*}
\end{lemma}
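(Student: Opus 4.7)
The plan is to combine the Schottky structure guaranteed by Lemma~\ref{sl2schottky} with quantitative control on the sizes and pairwise separations of the four isometric circles $C_1, C_2, C_3, C_4$, and then to invoke a standard covering bound for the Hausdorff dimension of a Schottky limit set.

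First I would show that, with probability tending to $1$ as $X \to \infty$, every isometric circle has radius at most $\rho$ and the four circles are pairwise separated by distance at least $D$, where the ratio $\rho/D$ can be made as small as we like. The radius bound follows from $r(C_i) = 1/|c_i|$ together with a lattice point count analogous to the one preceding \eqref{tracelarge}: fixing $c = c_0$ and counting solutions of $ad - bc_0 = 1$ with $\|\gamma\| \leq X$ (via a standard divisor-type estimate) gives $|\{\gamma \in B_X : |c(\gamma)| < T\}| \ll_\epsilon T \cdot X^{1+\epsilon}$, so that $\mu_X(|c_i| < T) \to 0$ for any fixed $T$. The separation bound then follows from Lemma~\ref{disjointcircles}: since the fixed points $\alpha_i, \beta_i$ lie inside $C_i$ and $C_{i+2}$ once $|\mathrm{tr}(g_i)|$ is large, the generic separation $\geq r$ of the fixed points transfers, at the cost of at most $2\rho$, to separation $\geq r - 2\rho =: D$ of the circles.

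Next I would run the standard Schottky covering argument on this good event. Each generator $g_i^{\pm 1}$ acts on the complement of its source circle as a M\"obius map with $|g'(z)| = (r(C_i)/\mathrm{dist}(z, \mathrm{center}))^2$, so outside the union of the four circles one has $|g'| \leq (\rho/D)^2$. By induction on the word length, every reduced word of length $n$ in $\{g_1^{\pm 1}, g_2^{\pm 1}\}$ sends a fixed bounded reference region into a set of diameter $\ll \rho \cdot (\rho/D)^{2(n-1)}$. Covering $\Lambda(\Gamma)$ by the $4 \cdot 3^{n-1}$ cylinders at level $n$ and examining the resulting $s$-Hausdorff sum, convergence when $3(\rho/D)^{2s} < 1$ yields
$$
\delta(\Gamma) \;\leq\; \frac{\log 3}{2 \log(D/\rho)}.
$$
The right hand side can be made smaller than any prescribed $\epsilon > 0$ by choosing $\rho$ small enough, and the lower bound $\delta(\Gamma) > 0$ is automatic for any non-elementary Schottky group, since $\Lambda(\Gamma)$ is then a non-empty Cantor set of positive Hausdorff dimension.

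The main obstacle is the simultaneous geometric bookkeeping in the first step: both the radius control (via the count for small $|c_i|$) and the separation control (via Lemma~\ref{disjointcircles}) must be arranged on a single high-probability event so that the ratio $\rho/D$ is actually small there. Once this is in place, the Hausdorff dimension estimate is a routine covering computation.
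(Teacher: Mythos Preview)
Your argument is essentially the paper's own: both use the Schottky structure from Lemma~\ref{sl2schottky}, the generic smallness of the isometric-circle radii, and the separation from Lemma~\ref{disjointcircles}, and then bound $\delta(\Gamma)$ by $\log 3/(2\log(D/\rho))$ --- the paper simply quotes this inequality from \cite{B} (in the equivalent form $\delta\le -\log 3/(2\log\lambda)$ with $\lambda=\max_{i\neq j} r(i)/(|a(i)-a(j)|-r(j))$) rather than deriving it by the covering computation you sketch. The one genuine difference is the lower bound $\delta(\Gamma)>0$: the paper deduces it from generic Zariski density via \cite{R2} and \cite{KL}, whereas you use the more self-contained fact that a rank-two Schottky group is non-elementary; just note that ``non-empty Cantor set'' alone does not force positive Hausdorff dimension --- what you are really invoking is that a non-elementary (convex-cocompact) Fuchsian group has positive critical exponent.
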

\begin{proof}
Let $C_1,C_2,C_3,C_{4}$ be as above, and let $I_i$ denote the $i$th coordinate of
\begin{equation*}
I=(g_1,g_2,g_1^{-1},g_2^{-1})
\end{equation*}
By Lemma~\ref{sl2schottky}, the circles $C_i$ are mutually external and disjoint, and the generators $g_i$ of $\Gamma$ map the exterior of $C_i$ onto the interior of $C_{i+2}$ while their inverses do the opposite.  We recall the following setup from \cite{B}.  Let $K=3$, let $a(j)$ denote the center of $C_j$, and let $r(j)$ denote the radius of $C_j$.  Define $\Sigma(m)$ to be the set of all sequences $(i_1,\dots,i_m)$ where
\begin{equation*}
i_1,\dots,i_m\in \{1,2,3, 4\} \mbox{ and } i_1\not=i_{2}\pm 2, \dots, i_{m-1}\not= i_m\pm 2
\end{equation*}
Define by
\begin{equation*}
I(i_1,\dots,i_m)= I_{i_1}\cdots I_{i_{m-1}}(C_{i_m})
\end{equation*}
where $(i_1,\dots,i_m)\in\Sigma(m)$, $m\geq 2$ and $I_sI_t(x)=I_s(I_t(x))$.  It is shown in \cite{B} that the limit set $\Lambda(\Gamma)$ of $\Gamma$ can then be written as
\begin{equation}\label{limitset}
\Lambda(\Gamma)=\bigcap_{m=1}^{\infty}\bigcup_{(i_1,\dots,i_m)\in\Sigma(m)}I(i_1,\dots,i_m)
\end{equation}
and its Hausdorff dimension, $\delta(\Gamma)$ has the upper bound
\begin{equation}\label{hausupper}
\delta(\Gamma)\leq \frac{-\log K}{2\cdot\log \lambda}
\end{equation}
where
\begin{equation}\label{hauslambda}
\lambda=\max_{1\leq i\not= j\leq K+1}\frac{r(i)}{|a(i)-a(j)|-r(j)}
\end{equation}
Since in our setup all radii $r(i)$ tend to zero and the distances between the centers $a(i)$ are large with high probability as $X$ tends to infinity, (\ref{hausupper}) and (\ref{hauslambda}) imply that for any $\epsilon>0$
\begin{equation*}
\lim_{X\rightarrow\infty}\mu_X(\{g=(g_1,g_2) \mbox{ s.t. } \delta(\Gamma)<\epsilon\})=1
\end{equation*}
as desired.

To see that $\delta(\Gamma)>0$ with probability tending to $1$, note that by theorems of Rivin in \cite{R2} and Kantor-Lubotzky in \cite{KL} we have that finitely generated subgroups of $\SLn(\mathbb Z)$ are Zariski dense in $\SLn(\mathbb C)$ with probability tending to $1$ (in particular with our Euclidean measure).  Therefore $\delta(\Gamma)>0$ with probability tending to $1$.
\end{proof}

\section{Subgroups of $\SLn(\ZZ)$ for $n>2$}\label{sln}

In the previous section, we considered the action of $\textrm{SL}_2$ on the upper half plane $\mathbb H$, and showed that with probability tending to $1$ the group $\Gamma(g)$ is a Schottky group via a ping-pong method on the boundary of $\mathbb H$.  In this section, we use results of Breuillard-Gelander in \cite{BG} to prove an analogous statement for $\textrm{SL}_n(\ZZ)$ for $n>2$ after changing the notion of a ball of radius $T$ somewhat.  We also discuss what happens if we consider the usual Euclidean ball model in $\textrm{SL}_n$ where $n>2$, for which this strategy will not prove that finitely generated subgroups of $\SLn(\ZZ)$ are generically thin.

\subsection{Ping-Pong}\label{pingpongsection}
$$$$

\vspace{-0.2in}

A natural analog in higher rank of the methods used in Section~\ref{sl2sec} to prove generic thinness is the ping-pong argument for subgroups of $\SLn(\mathbb R)$ which is examined in \cite{BG} (note that \cite{BG} also considers $\SLn$ over nonarchimedian fields).  We recall the relevant results here.

We no longer have that $\SLn(\mathbb R)$ acts on $\mathbb H^n$ in the nice way that $\SLt(\mathbb R)$ acts on $\mathbb H$, and so  we consider the action of $\SLn(\mathbb R)$ on real projective space $\mathbb P^{n-1}(\mathbb R)$, viewed as an $n$-dimensional vector space.  We define the distance in $\mathbb P^{n-1}(\mathbb R)$ by
\begin{equation}\label{projdist}
d([v],[w])=\frac{||v\wedge w||}{||v||\cdot||w||},
\end{equation}
where $[v]$ denotes the line spanned by $v$, and $||v\wedge w||$ is defined as follows: writing
$$v\wedge w=\sum_{1\leq i<j\leq n}(v_iw_j-v_jw_i)e_i\wedge e_j,$$
where $(e_1,\dots,e_n)$ is the canonical basis of $\mathbb R^n$, we have
$$||v\wedge w||^2=\sum_{1\leq i<j\leq n}(v_iw_j-v_jw_i)^2.$$
Note that this is simply the sine of the angle between $v$ and $w$.  The metric in (\ref{projdist}) satisfies some useful properties which are described in \cite{BG} and repeated below:
\begin{itemize}
\item Given a linear form $f:\mathbb R^n\rightarrow \mathbb R$, we have
\begin{equation}\label{metricproperty1}
d([v],[\ker f])=\frac{|f(v)|}{||f||\cdot ||v||}
\end{equation}
\item The group $K=\textrm{SO}_n(\mathbb R)$ acts isometrically on $\mathbb P^{n-1}(\mathbb R)$ with the metric $d$.
\end{itemize}

In what follows, we consider the Cartan decomposition of $\SLn(\mathbb R)$, namely that it decomposes as
\begin{equation*}
\SLn(\mathbb R)=K\,A\,K
\end{equation*}
where $K=\textrm{SO}_n(\mathbb R)$ is the maximal compact subgroup of $\textrm{SL}_n(\mathbb R)$ and $$A=\{{\mbox{diag}}(e^{j_1},\dots,e^{j_n})\; |\; j_i\in\mathbb R, j_i\leq j_{i+1},\sum_i j_i=0\}.$$  Specifically, any element $g\in\SLn(\ZZ)$ can be written as
\begin{equation}\label{KAK}
g=k_ga_gk_g'
\end{equation}
where $k_g,k'_g\in K$ and $a_g\in A$.  The matrix $a_g$ is uniquely determined by $g$: it is the diagonal matrix with the eigenvalues of $g^t g$ on the diagonal, ordered from highest to lowest.

In \cite{BG}, Breuillard-Gelander give a way to construct finitely generated free subgroups of $\SLn(\mathbb R)$ (and $\SLn$ over other fields) by producing generators which form a ping-pong $n$-tuple and which hence generate a free group.  Note that a free subgroup of $\SLn(\ZZ)$ is infinite index in $\SLn(\ZZ)$ if $n>2$: one argument proving this is that free groups do not have Kazhdan property $T$ but finite index subgroups of $\SLn(\ZZ)$ do.  Hence it is natural to try to use the characterization of freeness in \cite{BG} to show that the generic finitely generated subgroup of $\SLn(\ZZ)$ is free, and immediately conclude that thinness is also generic.  We recall the definition of a ping-pong pair from \cite{BG} below.

\begin{defn}
Two elements $g_1,g_2\in\SLn(\mathbb R)$ are called a ping-pong pair if both $g_1$ and $g_2$ are $(r,\epsilon)$-very proximal with respect to some $r>2\epsilon>0$, and if the attracting points of $g_i$ and $g_i^{-1}$ are at least distance $r$ apart from the repulsive hyperplanes of $g_j$ and $g_j^{-1}$ in $\mathbb P^{n-1}(\mathbb R)$, where $i\not=j$.
\end{defn}

In the above definition, an element $\gamma\in \SLn(\mathbb R)$ is said to be $(r,\epsilon)$-very proximal if both $\gamma$ and $\gamma^{-1}$ are $(r,\epsilon)$-proximal.  Namely, both $\gamma$ and $\gamma^{-1}$ are $\epsilon$-contracting with respect to some attracting point $v_{\gamma}\in \mathbb P^{n-1}(\mathbb R)$ and some repulsive hyperplane $H_{\gamma}$, such that $d(v_{\gamma}, H_{\gamma}) \geq r$.  Finally, $\gamma$ is called $\epsilon$-contracting if if there exists a point $v_{\gamma}\in \mathbb P^{n-1}(\mathbb R)$ and a projective hyperplane $H_{\gamma}$, such that $\gamma$ maps the complement of the $\epsilon$-neighborhood of $H_{\gamma}$ into the $\epsilon$-ball around $v_{\gamma}$.

One can hope to then prove generic thinness in $\SLn(\ZZ)$ by proving that two elements chosen uniformly at random out of a ball in $\SLn(\ZZ)$ will be $(r,\epsilon)$-very proximal, and that their corresponding attracting points and repulsive hyperplanes are at least $r$ apart with probability tending to $1$ as the radius of the ball grows to infinity.

By Proposition~3.1 in \cite{BG}, the necessary and sufficient condition for $\gamma$ to be $\epsilon$-contracting can be stated simply in terms of the top two singular values of $\gamma$:

\begin{thm}[Proposition~3.1 \cite{BG}]\label{gap}
Let $\epsilon<1/4$ and let $\gamma\in\mathrm{SL}_n(\mathbb R)$.  Let $a_1(\gamma)$ and $a_2(\gamma)$ be the largest and second-largest singular values of $\gamma$, respectively (i.e. largest and second-largest eigenvalues of $\gamma^t\gamma$).  If $\frac{a_2(\gamma)}{a_1(\gamma)} \leq \epsilon^2$, then $\gamma$ is $\epsilon$-contracting. More precisely, writing $\gamma = k_{\gamma}a_{\gamma}k_{\gamma}'$ , one can take $H_{\gamma}$ to be the projective hyperplane spanned by $\{k_{\gamma}'^{-1}(e_i)\}_{i=2}^n$, and $v_{\gamma}= k_{\gamma}(e_1)$, where $(e_1,\dots,e_n)$ is the canonical basis of $\mathbb R^n$.

Conversely, suppose $\gamma$ is $\epsilon$-contracting.  Then $\frac{a_2(\gamma)}{a_1(\gamma)}\leq4\epsilon^2$.
\end{thm}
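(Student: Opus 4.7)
The plan is to handle the two implications separately, both by direct manipulation of the Cartan decomposition $\gamma = k_\gamma a_\gamma k'_\gamma$. Throughout, set $u_i = k'^{-1}_\gamma(e_i)$ and $v_i = k_\gamma(e_i)$, so that $\{u_i\}$ and $\{v_i\}$ are orthonormal bases of $\mathbb R^n$ with $\gamma u_i = a_i v_i$. For the forward direction I take $v_\gamma = [v_1]$ and $H_\gamma$ the projective span of $u_2,\dots,u_n$. Given any $[w]$ with $d([w], H_\gamma) > \epsilon$, normalize $\|w\| = 1$ and expand $w = \sum c_i u_i$; applying (\ref{metricproperty1}) to $f(x) = \langle x, u_1\rangle$ gives $|c_1| > \epsilon$, and a short bivector computation yields $\|\gamma w \wedge v_1\|^2 = \sum_{i \geq 2} c_i^2 a_i^2 \leq a_2^2$ while $\|\gamma w\| \geq |c_1| a_1 > \epsilon a_1$, so
\begin{equation*}
d(\gamma[w], v_\gamma) \leq \frac{a_2}{\epsilon a_1} \leq \epsilon
\end{equation*}
by the hypothesis $a_2/a_1 \leq \epsilon^2$.

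For the converse, suppose $\gamma$ is $\epsilon$-contracting with respect to some pair $(v, H)$ and $\epsilon < 1/4$. Let $h$ be a unit normal to $H$ and write $\alpha_i = \langle u_i, h\rangle$, so that $d([u_i], H) = |\alpha_i|$ and $\sum_i \alpha_i^2 = 1$. The strategy is first to localize $(v, H)$ near the canonical pair, then to extract the $\epsilon^2$ gap from a carefully chosen test vector. For localization: since the $[v_i]$ are pairwise at projective distance $1$, at most one of the $|\alpha_i|$ can exceed $\epsilon$ (otherwise two of the $[v_i] = \gamma[u_i]$ would both lie in $B_\epsilon(v)$, giving $1 \leq 2\epsilon$); granting that the surviving index must be $i = 1$ (discussed below), we get $d(v, [v_1]) \leq \epsilon$ and $|\alpha_j| \leq \epsilon$ for all $j \geq 2$. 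Testing $\gamma$ next on $w = u_2 + \lambda \sum_{j \geq 3} \alpha_j u_j$ as $\lambda \to \infty$: on one hand $d([w], H) \to \sqrt{\sum_{j \geq 3}\alpha_j^2}$, while on the other $\gamma w$ has no $v_1$-component so $d([\gamma w], [v_1]) = 1$, contradicting the triangle bound $\leq 2\epsilon$ unless $w$ fails to be in the good set; this forces $\sum_{j \geq 3} \alpha_j^2 \leq \epsilon^2$, hence $\alpha_1^2 \geq 1 - 2\epsilon^2$.

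For the final estimate, I probe $\gamma$ at $w_t = u_2 + t u_1$ for the smallest $t > 0$ with $d([w_t], H) > \epsilon$; solving the resulting quadratic in $t$, together with $\alpha_1^2 - \epsilon^2 \geq 1 - 3\epsilon^2$ and $|\alpha_2| \leq \epsilon$, gives $t_{\min} = O(\epsilon)$. Then $\gamma w_t = a_2 v_2 + t a_1 v_1$ yields $d([\gamma w_t], [v_1]) = a_2/\sqrt{a_2^2 + t^2 a_1^2}$, which combined with the triangle bound $d([\gamma w_t], [v_1]) \leq 2\epsilon$ rearranges to $a_2/a_1 \leq 2\epsilon t_{\min}/\sqrt{1 - 4\epsilon^2} \leq 4\epsilon^2$ for $\epsilon < 1/4$.

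The main obstacle I anticipate is the case analysis in the localization step, specifically ruling out the possibility that the surviving index could be some $j \geq 2$ (which would instead place $v$ near $[v_j]$). To eliminate this I would use a probe $w = u_1 + s u_j$ for $s$ just above its minimal good value, and contrast the distance $a_1/\sqrt{a_1^2 + s^2 a_j^2}$ of $[\gamma w]$ from $[v_j]$ with the contracting constraint $\leq 2\epsilon$, deducing $a_1/a_j < 1$, a contradiction with $a_1 \geq a_j$. The subtlety is that this probe requires $|\alpha_j|$ to be comfortably larger than $\epsilon$, which requires re-running the sum-of-squares argument of the localization step inside this subcase.
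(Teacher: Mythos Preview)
The paper does not itself prove this theorem---it is quoted from Breuillard--Gelander. The only related argument in the paper is the proof of Lemma~\ref{epscontractextpower}, which establishes the forward implication in the more general setting of $\mathbb P(\bigwedge^k\mathbb R^n)$; your forward direction is precisely the $k=1$ case of that computation (reduce via the isometric $K$-action, then bound $\|\gamma w\wedge v_1\|$ and $\|\gamma w\|$ directly) and is correct.

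For the converse there is nothing in the paper to compare against, so let me just flag two genuine gaps in your sketch. First, the localization step silently assumes that \emph{some} $|\alpha_i|$ exceeds $\epsilon$. When $n\epsilon^2\geq 1$ (i.e.\ $n>16$, since $\epsilon<1/4$) one can have $|\alpha_i|\leq\epsilon$ for every $i$ while $\sum_i\alpha_i^2=1$; then none of the $[u_i]$ lie in the good region and you have no anchor for $v$ near any $[v_j]$, so steps 3 and onward never get started. This case needs a separate argument (for instance, probing near $[h]$ itself rather than near the $[u_i]$). Second, your final estimate does not actually recover the constant $4$. In the worst case $\alpha_2=-\epsilon\,\mathrm{sgn}(\alpha_1)$ and $\alpha_1^2=1-2\epsilon^2$, one computes $t_{\min}=2\epsilon\sqrt{1-2\epsilon^2}/(1-3\epsilon^2)$, and hence
\[
\frac{a_2}{a_1}\;\leq\;\frac{2\epsilon\, t_{\min}}{\sqrt{1-4\epsilon^2}}\;=\;\frac{4\epsilon^2\sqrt{1-2\epsilon^2}}{(1-3\epsilon^2)\sqrt{1-4\epsilon^2}},
\]
which is asymptotic to $4\epsilon^2$ as $\epsilon\to 0$ but strictly exceeds $4\epsilon^2$ for $\epsilon$ near $1/4$ (at $\epsilon=1/4$ it gives about $0.33$, not $0.25$). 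So your route yields $a_2/a_1=O(\epsilon^2)$, which is all the present paper ever uses, but not the sharp constant in the statement.
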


Hence, if one could show that an element chosen uniformly at random out of a ball in $\SLn(\ZZ)$, as well as its inverse, is expected to have a ``large" ratio between the second-largest and largest singular value, then $(r,\epsilon)$-proximality would follow by appealing to equidistribution in sectors in $\SLn(\mathbb R)$.  One can also use the work of \cite{BG} to prove that two given elements form a ping-pong pair when viewed as acting on $\mathbb P(\bigwedge^k(\mathbb R^n))$ , where $\bigwedge^k$ denotes the exterior power, for a suitable $k$.  To do this, one would in particular need each of the two elements and their inverses to have a large ratio between the $k$-th and $(k+1)$st singular value. Interestingly, as we show in the next section, none of these properties of singular values are generic in the usual Euclidean ball model if $n>2$.  We are, however, able to show that the middle two singular values have large ratio with probability tending to $1$ in a modified Euclidean model, and with this we are able to prove statements on generic thinness in Section~\ref{modified}.

Note, however, that at this point we can already show a version of R.~Aoun's Theorem:
\begin{thm}
\label{aounthm}
Given two long random products $w_1, w_2$ of generators of a Zariski dense subgroup $\Gamma$ of $\textrm{SL}_n(\mathbb{Z}),$ $w_1$ and $w_2$ generate a free subgroup.
\end{thm}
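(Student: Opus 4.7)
The plan is to verify, for two independent random products $w_1, w_2$ of length $N$ in the fixed generating set of $\Gamma$, the ping-pong criterion of \cite{BG} recalled above. By Theorem~\ref{gap}, this reduces to showing (i) that the singular-value ratios $a_2(w_i^{\pm 1})/a_1(w_i^{\pm 1})$ are uniformly small for $N$ large, and (ii) that the attracting point of each $w_i^{\pm 1}$ lies at uniformly positive distance from its own repulsive hyperplane, as well as from the repulsive hyperplane of the partner element.

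The first item follows from the simplicity of the top Lyapunov exponent. Since $\Gamma$ is Zariski dense in $\SLn(\mathbb R)$, the Guivarc'h--Raugi--Goldsheid--Margulis theorem yields $\lambda_1 > \lambda_2$ for the top two Lyapunov exponents of the random walk, so
\[
\frac{a_2(w_i)}{a_1(w_i)} \leq e^{-N(\lambda_1 - \lambda_2)/2}
\]
almost surely for $N$ large enough. By Theorem~\ref{gap} this makes $w_i$ an $\epsilon_N$-contracting element with $\epsilon_N \to 0$; applying the same reasoning to the reversed walk (whose support still Zariski-generates $\SLn(\mathbb R)$) controls the inverse. Hence each $w_i$ is $(r, \epsilon_N)$-very proximal for any fixed $r>0$ as soon as the attracting point and repulsive hyperplane of $w_i$ (and of $w_i^{-1}$) are separated.

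For the second item I would invoke Furstenberg boundary theory. The attracting points $v_{w_i}$ converge almost surely along the walk to random limits in $\mathbb P^{n-1}(\mathbb R)$ whose law is the unique $\mu$-stationary probability measure $\nu$; strong irreducibility and proximality, both guaranteed by Zariski density, imply that $\nu$ is non-atomic and assigns mass zero to every projective hyperplane, and the analogous statement holds on the dual projective space for the repulsive hyperplanes. Since $w_1$ and $w_2$ are independent, the probability that $v_{w_i}$ falls within a fixed distance $r>0$ of any of $H_{w_i}, H_{w_j^{\pm 1}}$ tends to $0$ as $N\to\infty$. Combining (i) and (ii) with \cite{BG}, Section~4, makes $(w_1, w_2)$ a ping-pong pair on $\mathbb P^{n-1}(\mathbb R)$ with probability tending to $1$, so $\langle w_1, w_2\rangle$ is free. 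The main obstacle is to couple (i) and (ii) on the same large-probability event: one must arrange that the singular-value gap and the transversality of the limiting flags are both visible at the same scale $N$, which is done via effective convergence of the Cartan projection together with continuity of the Oseledets splitting along the walk.
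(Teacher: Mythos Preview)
Your plan is correct and matches the paper's own argument in structure: both reduce to (i) the singular-value gap via simplicity of the Lyapunov spectrum (Guivarc'h--Raugi, Goldsheid--Margulis), and (ii) a separation statement for the attracting points and repulsive hyperplanes, after which the ping-pong criterion of \cite{BG} applies. The only difference is in how step (ii) is justified: the paper invokes equidistribution of the Cartan components in sectors (citing Bougerol), whereas you argue via Furstenberg boundary theory and the non-atomicity of the stationary measure. These are essentially two phrasings of the same phenomenon---the $K$-parts of the $KAK$ decomposition of a long random product become asymptotically Haar-distributed and independent, which is what makes both the sector-equidistribution statement and the Furstenberg convergence statement work. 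Your version is somewhat more explicit about the delicate point that $v_{w_i}$ and $H_{w_i}$ come from the \emph{same} word (hence are not a priori independent), and your remark about coupling (i) and (ii) on a common large-probability event is exactly the right concern; the sector-equidistribution formulation in the paper handles this implicitly since it controls the joint law of $(k_g,a_g,k_g')$.
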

\begin{proof}
By the results of Guivarc'h and Raugi \cite{GR} (see also Goldsheid-Margulis \cite{goldmar}), the assumption of Zariski density implies that the Lyapunov exponents of $\Gamma$ with the given generating set are distinct, and so for the ratio of the top singular value to the second biggest grows exponentially fast as a function of $n.$ Since the words are also known to be equi-distributed in sectors (see, e.g. \cite{bougerol}), the result follows.
\end{proof}

\subsection{$\epsilon$-contraction in $\SLn(\ZZ)$ where $n>2$}\label{epscontract}
$$$$

\vspace{-0.4in}

In this section we prove the following theorem, which from the discussion in the previous section implies in particular that, if $n>2$, it is not true that two elements chosen uniformly at random out of a Euclidean ball of radius $X$ will generically form a ping-pong pair in $\mathbb P^{n-1}(\mathbb R)$ (and hence one cannot conclude that the group generated by two such elements is generically thin via this route). Furthermore, a similar result will hold when one considers $\mathbb P(\bigwedge^k(\mathbb R^n))$ for various $k$, so even with this strategy, the best that one can prove via this method is that a randomly chosen finitely generated subgroup of $\textrm{SL}_n(\mathbb Z)$ will be thin with some \textbf{positive} probability (unfortunately, the probability given by our argument decreases as a function of $n$.)  Instead of comparing the number of elements in $\textrm{SL}_n(\mathbb Z)$ in a ball which are $\epsilon-contracting$ to the total number of elements in the ball, we compare the measures of the analogous sets in $\textrm{SL}_n(\mathbb R)$.  This is essentially identical to the comparison over $\mathbb Z$ by Theorem~1.4 of \cite{EM} after one proves that the sets
\begin{equation}\label{usualballgap}
C_{X,\eta}:=\{{\mbox{diag}}(\alpha_1,\dots, \alpha_n)\; |\; \alpha_i\in\mathbb R, X\geq\alpha_1\geq\eta\alpha_2;\; \alpha_2\geq\alpha_3\geq\cdots\geq \alpha_{n},\prod_i \alpha_i=1 \}.
\end{equation}
where $\eta>16$ is fixed make up a well-rounded sequence of sets in the sense of Definition~\ref{wellroundeddefn}.

\begin{thm}\label{toptwoth}
For $n\geq 3$ fixed, let $G=\mathrm{SL}_n(\mathbb R)$, and let $\mu$ be Haar measure on $\mathrm{SL}_n(\mathbb R)$.  For $g\in G$, denote by $a_1(g),a_2(g),\dots,a_n(g)$ the nonzero entries of the diagonal matrix $a_g$ in the $KAK$ decomposition of $g$ in (\ref{KAK}) with $a_1(g)\geq a_2(g)\geq\cdots\geq a_n(g)>0$.  Fix $\eta>4$. Then
$$0<\frac{\lim_{X\rightarrow\infty} \, \mu(\{g\in G\, | \, ||g||\leq X, a_1(g)/a_2(g)\geq\eta^2\})}{\lim_{X\rightarrow\infty} \, \mu(\{g\in G\, | \, ||g||\leq X\})}<\frac{1}{\eta^{2(n^2-3n+2)}}.$$
\end{thm}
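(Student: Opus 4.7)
The plan is to reduce the statement to a Haar-volume comparison on the diagonal torus $A\subset\mathrm{SL}_n(\mathbb R)$ via the $KAK$ decomposition. Both $\|g\|=a_1(g)$ and $a_1(g)/a_2(g)$ are bi-$K$-invariant, so the two $K$-factors contribute identical constants to numerator and denominator and cancel. The task reduces to integrating the Cartan density $\prod_{i<j}(a_i^2-a_j^2)\prod_{i=1}^{n-1}da_i/a_i$ over the positive Weyl chamber, with and without the restriction $a_1/a_2\geq\eta^2$. Switching to logarithmic coordinates $t_i=\log a_i$ and then simple-root coordinates $u_k=t_k-t_{k+1}\geq 0$, and using $\sum t_i=0$, the density becomes (up to a constant Jacobian) $\prod_{i<j}2\sinh(t_i-t_j)\,du_1\cdots du_{n-1}$, with
\[
t_1=\tfrac{1}{n}\sum_{k=1}^{n-1}(n-k)u_k,\qquad 2\rho(t):=\sum_{i<j}(t_i-t_j)=\sum_{k=1}^{n-1}k(n-k)u_k,
\]
so that the two defining constraints become $\sum_k(n-k)u_k\leq n\log X$ (from $\|g\|\leq X$) and $u_1\geq 2\log\eta$ (from $a_1/a_2\geq\eta^2$).

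I would then exploit the leading-order asymptotic $2\sinh(x)\sim e^x$, with $2\sinh(x)\leq e^x$ always, to replace $\prod 2\sinh(t_i-t_j)$ by $e^{2\rho(t)}$ and obtain ``exponential-density'' versions $\tilde D(X)$ and $\tilde N(X)$ of the full and restricted integrals. The key observation is a translation identity: the shift $u_1\mapsto u_1-2\log\eta$ bijects the restricted region onto the full chamber with $\log X$ replaced by $\log X-2(n-1)\log\eta/n$, while multiplying $e^{2\rho(t)}$ by exactly $\eta^{2(n-1)}$ (since $u_1$ has coefficient $n-1$ in $2\rho$). Combined with the asymptotic $\tilde D(X)\sim c_n X^{n(n-1)}$, which is a routine iterated integral of $e^{\sum k(n-k)u_k}$ inside the simplex $\sum(n-k)u_k\leq n\log X$, this yields
\[
\tilde N(X)=\eta^{2(n-1)}\tilde D\!\bigl(X\eta^{-2(n-1)/n}\bigr)\sim c_n X^{n(n-1)}\eta^{-2(n-1)(n-2)}.
\]
Since $D(X)/\tilde D(X)\to 1$ and $N(X)/\tilde N(X)\to 1$ as $X\to\infty$ (both integrals are concentrated on the bulk region where all $u_k$ are large and the discrepancy $e^x-2\sinh(x)=e^{-x}$ is exponentially small), the asymptotic ratio equals $\eta^{-2(n-1)(n-2)}=\eta^{-2(n^2-3n+2)}$, which is exactly the advertised upper bound.

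Positivity of the limit is immediate: the restricted region contains an open neighborhood of, say, $\mathrm{diag}(\eta^{2+\delta},1,\ldots,1,\eta^{-2-\delta})$ for small $\delta>0$, a set of positive Haar measure, so $N(X)$ grows at the full polynomial rate $X^{n(n-1)}$. For strictness in the upper bound, one would analyse the subleading $\sinh$-versus-$\exp$ discrepancy: the stratum $\{u_1=0\}$, where the factor $2\sinh(u_1)$ drops most severely below $e^{u_1}$, lies in the full chamber but is excluded from the restricted region, so $D$ suffers a strictly larger proportional deficit than $N$. The main obstacle is the careful bookkeeping required to identify the leading constants in $D(X)\sim c_n X^{n(n-1)}$ and $N(X)\sim c_n X^{n(n-1)}\eta^{-2(n-1)(n-2)}$ as the \emph{same} $c_n$, so that the ratio $N(X)/D(X)$ inherits the exact limiting value from $\tilde N(X)/\tilde D(X)$; this requires controlling the contributions of the strata where one or more $u_k$ is small, which carry comparable but distinct corrections in numerator and denominator.
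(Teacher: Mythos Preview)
Your translation identity $\tilde N(X)=\eta^{2(n-1)}\tilde D\bigl(X\eta^{-2(n-1)/n}\bigr)$ is correct and elegant, but the passage from $\tilde N/\tilde D$ to $N/D$ breaks down: the claim that $D/\tilde D\to 1$ (and likewise $N/\tilde N\to 1$) is false. The integrals are \emph{not} concentrated where all $u_k$ are large. Under the constraint $\sum_k(n-k)u_k\le n\log X$, the weight $e^{2\rho}=\exp\bigl(\sum_k k(n-k)u_k\bigr)$ rewards $u_{n-1}$ most heavily (the ratio of exponent-coefficient to constraint-coefficient is $k$), so the mass of both $D$ and $\tilde D$ concentrates near $u_{n-1}\approx n\log X$ with $u_1,\dots,u_{n-2}=O(1)$. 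On that stratum the factors $2\sinh(t_i-t_j)$ for roots with $j<n$ involve only the bounded variables $u_1,\dots,u_{n-2}$ and are \emph{not} close to $e^{t_i-t_j}$; hence $D/\tilde D$ tends to a constant strictly below $1$, not to $1$.

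For $n=3$ this is explicit: the Cartan density is $e^{2u_1+2u_2}-e^{2u_2}-e^{2u_1}+\cdots$, and over $\{u_1,u_2\ge 0,\ 2u_1+u_2\le 3\log X\}$ the term $-e^{2u_2}$ contributes at the \emph{same} order $X^6$ as the leading term (since $u_2$ may run up to $3\log X$), giving $D\sim\tfrac12\tilde D$. A parallel computation gives $N\sim(1-\tfrac12 e^{-2T})\tilde N$ with $T=2\log\eta$, so $N/D\to 2\eta^{-4}-\eta^{-8}$ rather than $\eta^{-4}$. Thus the single-exponential shortcut produces the wrong constant, and your own scheme already predicts \emph{equality} with the stated bound, which you then try to repair into a strict inequality by a second-order argument; that should have been a warning sign. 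The paper instead expands $\prod\sinh$ fully, integrates out $j_3,\dots,j_{n-1}$, and retains \emph{every} exponential $a_i\,e^{ij_1+kj_2}$ with $i+k=n^2-n$ (not just $e^{2\rho}$): the full and restricted integrals then have asymptotics $\alpha X^{n^2-n}$ and $\bigl(\sum_i \tfrac{a_i}{k(i+k)}e^{-kT}\bigr)X^{n^2-n}$, and the bound comes from comparing the two sums term by term using $k\ge n^2-3n+2$. The subleading exponentials you discarded are precisely what the argument runs on.
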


\begin{proof}
Recall that for any $g\in \SLn(\mathbb R)$, we have that $g=k_ga_gk_g'$, where $a_g\in A=\{{\mbox{diag}}(e^{j_1},\dots,e^{j_n})\; |\; j_i\in\mathbb R, j_i\leq j_{i+1},\sum_i j_i=0\}$ and $k_g,k_g'\in \textrm{SO}_n(\mathbb R)$.

Let $H\in A$ be the $n$-tuple $(j_1,\dots,j_n)$.  Consider the Haar measure on $A$ which is
\begin{equation}\label{haar}
\left(\prod_{\lambda\in\Sigma^+}\sinh(\lambda(H))\right)dH
\end{equation}
where $\Sigma^+=\{j_i-j_k\; |\; 1\leq k<i\leq n\}$ is the set of positive roots (see p. 142 in \cite{knapp}).  We consider all elements $\gamma\in\textrm{SL}_n(\mathbb R)$ such that $||\gamma||^2=\lambda_{\max}(\gamma^t\gamma)\leq X^2$: in other words, in the above notation, consider the region $R$ defined by
\begin{eqnarray}\label{polyR1}
e^{j_1}\leq X, && j_1\geq j_2\geq \cdots\geq j_n,\nonumber\\
e^{j_2}\leq X, && j_1+\cdots+j_n=0.\\
\vdots &&\nonumber\\
e^{j_n}\leq X, &&\nonumber
\end{eqnarray}
In order to prove Theorem~\ref{toptwoth}, we examine the ratio between the integral of the expression in (\ref{haar}) over the region $R$ and the integral over the subset of this region where $j_1-j_2\geq T$ and let $X$ tend to $\infty$.  The first integral in question is
$$\int_{R} \left(\prod_{\lambda\in\Sigma^+}\sinh(\lambda(H))\right)dH$$
We expand this as a sum of integrals of exponentials, and the integral becomes
\begin{equation}\label{firstinteven}
C\cdot\int_0^{\log X}\int_{\frac{\minus j_1}{n\minus1}}^{j_1}\int_{\frac{\minus j_1\minus j_2}{n\minus2}}^{j_2}\cdots\int_{\frac{\minus j_1\minus\cdots\minus j_{n\minus2}}{2}}^{j_{n\minus2}}\sum_{\sigma\in S_n} \textrm{sgn}(\sigma) e^{\sum_{k=1}^{n/2}(n-2k+1)(j_{\sigma(k)}-j_{\sigma(n/2+k)})}dj_{n-1}\cdots dj_1
\end{equation}
for $n$ even, or, for $n$ odd,
\begin{equation}\label{firstintodd}
C\cdot\int_0^{\log X}\int_{\frac{\minus j_1}{n\minus1}}^{j_1}\int_{\frac{\minus j_1\minus j_2}{n\minus2}}^{j_2}\cdots\int_{\frac{\minus j_1\minus\cdots\minus j_{n\minus2}}{2}}^{j_{n\minus2}}\sum_{\sigma\in S_n} \minus\textrm{sgn}(\sigma)e^{\sum_{k=1}^{n/2}(n-2k+1)(j_{\sigma(k)}-j_{\sigma((n-1)/2+k)})}dj_{n-1}\cdots dj_1
\end{equation}
where $C=1/2^{\frac{n(n-1)}{2}}$.  Substituting $j_n=-j_1-j_2-\cdots-j_{n-1}$, it is clear that the maximum on $R$ of any given exponential in the sum above is at most $(n^2-n)\log X$, reached always at the point $(\log X,\log X,\dots,\log X,-(n-1)\log X)$. In fact, from \cite{DRS} the integrals above are asymptotic to $cX^{n^2-n}$ for some nonzero constant $c$.  

The second integral, over the intersection of $R$ with the subset of $R$ where $j_1-j_2\geq T$ is the same as above, except that the upper limit of $j_2$ is replaced by $j_1-T$.  Since these two integrals (one over $R$ and the other over a subregion of $R$) differ only in the range of $j_2$, and noting that asymptotically only the terms $e^{s_1j_1+\cdots s_{n-1}j_{n-1}}$ for which $\sum s_i=n^2-n$ contribute, we write 
\begin{eqnarray*}
\int_{R} \left(\prod_{\lambda\in\Sigma^+}\sinh(\lambda(H))\right)dH&\sim& C\cdot\int_0^{\log X}\int_{\frac{\minus j_1}{n\minus1}}^{j_1}\sum_{\substack{i+k=n^2-n\\ 2\leq i\leq 2n-2\\ i\in2\mathbb Z}} a_i(n)e^{ij_1+kj_2}dj_2dj_1\\
&\sim& \sum_{\substack{i+k=n^2-n\\ 2\leq i\leq 2n-2\\ i\in2\mathbb Z}} \frac{a_i(n)}{k(i+k)}e^{(n^2-n)\log X}\\
&=& \alpha X^{n^2-n}
\end{eqnarray*}
for some $\alpha>0$, and 
\begin{eqnarray*}
\int_{R\cap\{j_1-j_2\geq T\}} \left(\prod_{\lambda\in\Sigma^+}\sinh(\lambda(H))\right)dH&\sim& C\cdot\int_0^{\log X}\int_{\frac{\minus j_1}{n\minus1}}^{j_1-T}\sum_{\substack{i+k=n^2-n\\ 2\leq i\leq 2n-2\\ i\in 2\mathbb Z}} a_i(n)e^{ij_1+kj_2}dj_2dj_1\\
&\sim& \sum_{\substack{i+k=n^2-n\\ 2\leq i\leq 2n-2\\ i\in2\mathbb Z}} \frac{a_i(n)}{k(i+k)}e^{(n^2-n)\log X}\cdot e^{-kT}
\end{eqnarray*}
Therefore, since $k\leq n^2-n-2n+2=n^2-3n+2$, we have
$$\int_{R\cap\{j_1-j_2\geq T\}} \left(\prod_{\lambda\in\Sigma^+}\sinh(\lambda(H))\right)dH\sim \beta X^{n^2-n}$$
where 
$$0<\beta<\alpha e^{-(n^2-3n+2)T}$$
which proves the claim.
\end{proof}

\subsection{Proof of Theorem~\ref{thinsln}}\label{modified}
$$$$

\vspace{-0.2in}

Although we have shown in the previous section that two elements chosen uniformly at random from a Euclidean ball in $\SLn(\ZZ)$ are not expected to form a ping-pong pair, we show in this section that two elements chosen uniformly at random from a rather natural modification of the notion of Euclidean ball in $G:=\textrm{SL}_n(\ZZ)$ will form a ping-pong pair.  Let
$$B_X'(G):=\{\gamma\in G\; |\; ||\gamma||<X \mbox{ and } ||\gamma^{-1}||<X\},$$
and let $\mu_X'$ denote the normalized counting measure on $(B_X')^2$ with respect to this region.  We then have the following.
\begin{prop}\label{freesln}
Let $G=\mathrm{SL}_n(\ZZ)$, and let $\Gamma(g)$ and $\mu_X'$ be as above.  Then we have
\begin{equation*}
\lim_{X\rightarrow\infty}\mu'_X(\{g=(g_1,g_2)\in G^2\; |\; \Gamma(g) \mbox{\emph{ is free}}\})=1
\end{equation*}
\end{prop}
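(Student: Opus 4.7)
The plan is to apply the Breuillard-Gelander ping-pong criterion to the action of $\SLn(\mathbb{R})$ on $\mathbb{P}\bigl(\bigwedge^k(\mathbb{R}^n)\bigr)$ for a suitable $k$. Since the singular values of $g$ acting on $\bigwedge^k(\mathbb{R}^n)$ are the products $e^{j_{i_1} + \cdots + j_{i_k}}$ over $k$-subsets of the Cartan exponents $j_i$, the ratio of the top two singular values on this space equals $e^{j_k - j_{k+1}}$. Hence by the wedge-power version of Theorem~\ref{gap}, it suffices to show that, for some fixed $k$, both gaps $s_k(g) := j_k(g) - j_{k+1}(g)$ and $s_k(g^{-1}) = s_{n-k}(g)$ are large with probability tending to $1$ under $\mu'_X$, and separately that the attracting points and repulsive hyperplanes of the four elements $g_1, g_1^{-1}, g_2, g_2^{-1}$ are in sufficiently general position.

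First I would reduce to an $\SLn(\mathbb{R})$-volume computation by applying the Eskin-McMullen counting theorem (Theorem~1.4 of \cite{EM}) together with the well-roundedness of $B_X'$ proved in Section~\ref{wellroundedsec}. The Haar measure on the Cartan subgroup has density $\prod_{i<j} \sinh(j_i - j_j)\, dH$, and the region $B_X'$ in Cartan coordinates is the polytope $\{H : j_1 \leq \log X,\; -j_n \leq \log X,\; \sum j_i = 0\}$. In gap coordinates $s_\ell = j_\ell - j_{\ell+1}$, the density is asymptotically $\exp\bigl(\sum_{\ell} \ell(n-\ell)\, s_\ell\bigr)$ and the defining constraints become $\sum_\ell \ell\, s_\ell \leq n\log X$ and $\sum_\ell (n-\ell)\, s_\ell \leq n\log X$. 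A linear-programming argument on this polytope shows that the exponential maximum is attained at a vertex where $s_k$ (and $s_{n-k}$) are both of order $\log X$, with $k$ chosen to maximize $k(n-k)$, i.e.\ $k = \lfloor n/2 \rfloor$. The involution $(j_1,\ldots,j_n) \mapsto (-j_n,\ldots,-j_1)$ preserves $B_X'$, corresponds to $g \mapsto g^{-1}$ in the Cartan decomposition, and swaps $s_\ell \leftrightarrow s_{n-\ell}$; this ensures that the concentration at $s_k$ is matched automatically by the concentration at $s_{n-k}$. A Laplace-type asymptotic then yields that for any fixed $T > 0$, the Haar probability of $B_X' \cap \{\min(s_k, s_{n-k}) < T\}$ vanishes relative to the total Haar mass of $B_X'$ as $X \to \infty$.

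Second, the generic-position condition on attracting points and repulsive hyperplanes follows from equidistribution of the $K$-components $k_g, k_g'$ in the Cartan decomposition of elements of $B_X'$. By Theorem~\ref{gap}, $v_g = k_g(e_1 \wedge \cdots \wedge e_k)$ and $H_g$ is determined by $k_g'^{-1}$, so the equidistribution of $(k_g, k_g')$ on $K \times K$ (which follows again from well-roundedness and the relevant counting statement from \cite{EM} and \cite{GO}) shows that the four attracting-point/repulsive-hyperplane pairs are in generic position, and in particular all relevant pairwise distances exceed any prescribed $r > 0$ with probability tending to $1$. Combining the two steps, the Breuillard-Gelander ping-pong criterion applies to $(g_1, g_2)$ generically, and hence $\langle g_1, g_2 \rangle$ is free with probability tending to $1$.

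The main obstacle is the first step: although the linear-programming picture is clear-cut, making the concentration rigorous requires a careful asymptotic evaluation of the Haar integral over $B_X'$, tracking enough of the sub-leading behavior to confirm that configurations with small middle gap contribute only a vanishing fraction of the total mass. The qualitative mechanism is the symmetry of $B_X'$ under Cartan inversion: unlike the usual Euclidean ball $B_X$, $B_X'$ does not contain the ``flat'' corner at which all of the mass concentrates under only the one-sided constraint $j_1 \leq \log X$ (the failure of concentration at this corner being precisely what drives Theorem~\ref{toptwoth}), and instead the mass is forced to open up the middle gap.
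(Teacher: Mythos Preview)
Your approach is the paper's: prove $\epsilon$-very-contraction on $\mathbb{P}(\bigwedge^{k}(\mathbb{R}^n))$ via a large middle singular-value gap (the paper's Lemma~\ref{middletwo}), then establish separation of attracting points and repulsive hyperplanes (the paper's Lemma~\ref{prox}), and conclude by Breuillard--Gelander ping-pong. Your linear-programming/Laplace description of the Haar integral is precisely the mechanism behind the paper's explicit expansion of $\prod_{\lambda}\sinh(\lambda(H))$ into signed exponentials; the paper bounds the ``bad'' region $\{s_k<T\}$ term by term and bounds the full region from below by restricting to a sub-polytope where all gaps exceed a fixed constant, but the content is the same, and your identification of the maximizing vertex and the role of the inversion symmetry $s_\ell\leftrightarrow s_{n-\ell}$ is exactly right.

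There is, however, a slip in your second step. Equidistribution of $(k_g,k_g')$ on $K\times K$ does \emph{not} yield that all relevant pairwise distances exceed a prescribed $r>0$ with probability tending to $1$: for fixed $r$ the limiting probability of the good event is the Haar measure of $\{(k,k'):d(v_k,H_{k'})>r\}$, which is strictly less than $1$. What the paper actually proves (Lemma~\ref{prox} and the remark following it) is the quantitative statement
\[
\limsup_{X\to\infty}\mu'_X\bigl(\{\,\text{some relevant distance}\le 2\epsilon'\,\}\bigr)\ll \epsilon',
\]
obtained by covering the compact orbit $K\cdot(e_1\wedge\cdots\wedge e_k)$ with $\asymp(\epsilon')^{-d}$ balls of radius $2\epsilon'$ and applying the Gorodnik--Oh sector equidistribution to each pair $(S_i,S_i')$. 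The proof then closes by a diagonal argument: for every small $\epsilon'>0$, $\epsilon'$-very-contraction holds with probability $\to 1$ while the separation condition with $r=2\epsilon'$ fails with limiting probability $O(\epsilon')$; hence the limit probability of forming a ping-pong pair is $\ge 1-C\epsilon'$ for every $\epsilon'$, and therefore equals $1$. Your sketch needs this quantitative bound and the final $\epsilon'\to 0$ step to be complete; equidistribution alone, without the $O(\epsilon')$ estimate, would only give a limit strictly below $1$.
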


As we note below, this is almost Theorem~\ref{thinsln}.  To prove this proposition, we show that generically our generators will form a ping-pong pair in some suitable space $\mathbb P(\bigwedge^k(\mathbb R^n))$.  According to \cite{BG}, the first step is to show that they are generically $\epsilon$-very contracting.

\begin{lemma}\label{middletwo}
Let $G=\mathrm{SL}_n(\ZZ)$, and let $\mu'_X$ be as above.  For $g\in G$, denote by $a_1(g),\dots, a_n(g)$ the nonzero entries of the diagonal matrix $a_g$ in the $KAK$ decomposition of $g$ in (\ref{KAK}) with $a_1(g)\geq a_2(g)\geq \cdots\geq a_n(g)>0$.  Fix $\eta>4$. Then, if $n$ is even, we have 
$$\lim_{X\rightarrow\infty} \, \mu'_X\left(\left\{g\in G\, \Big| \, \frac{a_k(g)}{a_{k+1}(g)}\geq\eta^2 \mbox{\emph{ and }} \frac{a_k(g^{-1})}{a_{k+1}(g^{-1})}\geq\eta^2\right\}\right)=1,$$
where $k=n/2$.  If $n$ is odd, we have that 
$$\lim_{X\rightarrow\infty} \, \mu'_X\left(\left\{g\in G\, \Big| \, \frac{a_k(g)}{a_{k+1}(g)}, \frac{a_{k+1}(g)}{a_{k+2}(g)}\geq\eta^2, \mbox{\emph{ and }} \frac{a_k(g^{-1})}{a_{k+1}(g^{-1})},  \frac{a_{k+1}(g^{-1})}{a_{k+2}(g^{-1})}\geq\eta^2\right\}\right)=1,$$
where $k=(n-1)/2$.
\end{lemma}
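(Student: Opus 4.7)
The strategy is to reduce to a Laplace-type asymptotic for Haar integrals on the Cartan subgroup $A$. The starting observation is that $a_i(g^{-1}) = a_{n+1-i}(g)^{-1}$, so $a_k(g^{-1})/a_{k+1}(g^{-1}) = a_{n-k}(g)/a_{n+1-k}(g)$. Hence for $n$ even the condition on $g^{-1}$ literally duplicates the condition on $g$, while for $n$ odd the two conditions on $g^{-1}$ merely swap the two conditions required of $g$. All of the inequalities therefore collapse to a condition on the singular values of $g$ alone. Combining this with the well-roundedness of the sets $B_X'$ established in Section~\ref{wellroundedsec} and Theorem~1.4 of \cite{EM}, we reduce the claim to a Haar-measure estimate on $\SLn(\mathbb R)$.

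Parametrize $a_g$ by $H = (j_1,\dots,j_n)$ with $j_1 \geq \cdots \geq j_n$ and $\sum_i j_i = 0$, so that $a_i(g) = e^{j_i}$. The symmetrized condition $g,g^{-1}\in B_X$ translates to the region
\[
R' = \left\{H : -\log X \leq j_n \leq \cdots \leq j_1 \leq \log X,\ \sum_i j_i = 0\right\},
\]
and with the Haar density (\ref{haar}) the question becomes comparing $\int_{R'}\prod_{i<j}\sinh(j_i-j_j)\,dH$ with the subintegral over the subregion where $j_k - j_{k+1} < 2\log\eta$ (and, for $n$ odd, an analogous subregion for $j_{k+1} - j_{k+2}$).

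Following the computation in the proof of Theorem~\ref{toptwoth}, expand the integrand and keep track of the leading exponential $\exp(\sum_i (n-2i+1) j_i)$. Over $R'$, this is maximized at the ``corner'' $(\log X,\dots,\log X,-\log X,\dots,-\log X)$ with $k=n/2$ copies of each value for $n$ even (giving maximum $X^{n^2/2}$), or at $(\log X,\dots,\log X,0,-\log X,\dots,-\log X)$ for $n$ odd (giving maximum $X^{(n^2-1)/2}$); a Laplace-type argument shows the full integral is of this order. Imposing $j_k - j_{k+1} < 2\log\eta$, a linear-programming computation in the slack variables $\delta_i = \log X - j_i$ ($i \leq k$) and $\gamma_i = j_{n+1-i} + \log X$ ($i \leq k$) gives the constrained maximum of the leading exponent: for $n$ even it drops by $2\log(X/\eta)$, yielding restricted-to-full ratio $O(\eta^2/X^2)$; for $n$ odd it drops by $\log X - 2\log\eta$, giving ratio $O(\eta^2/X)$. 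In both cases the ratio tends to $0$, and for $n$ odd the second bad set (where $j_{k+1} - j_{k+2} < 2\log\eta$) is treated identically via the involution $(j_1,\dots,j_n) \mapsto (-j_n,\dots,-j_1)$, which preserves both $R'$ and the Haar density while swapping the two middle gaps.

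The main obstacle is making the Laplace-type asymptotic rigorous, since the expansion of $\prod\sinh$ is a signed sum of exponentials. This is handled on $R'$ by the nonnegativity of each $\sinh(j_i-j_j)$: the crude upper bound $\sinh(x) \leq \tfrac12 e^x$ for $x \geq 0$ converts the restricted integral into a clean integral of the leading exponential, giving the upper bounds above. A matching lower bound on the full integral is obtained by restricting further to a bounded neighborhood of the extremal corner on which every gap $j_i - j_{i+1}$ exceeds a fixed positive constant, so each $\sinh(j_i-j_j)$ is at least a positive constant times $e^{j_i-j_j}$. The two bounds pinch the ratio of restricted to full integral to $0$, completing the proof.
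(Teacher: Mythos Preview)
Your proof is correct and follows essentially the same route as the paper: reduce to $g$ alone via $a_i(g^{-1})=a_{n+1-i}(g)^{-1}$, pass to Haar measure on $\SLn(\mathbb R)$ via well-roundedness and Eskin--McMullen, and compare the integral of the density $\prod_{i<j}\sinh(j_i-j_j)$ over the symmetrized region $R'$ with its restriction to the ``bad'' subregion where the middle gap is small. The one genuine difference is your treatment of the upper bound on the bad integral: instead of expanding $\prod\sinh$ into the full signed sum over $S_n$ and bounding each exponential separately by a case analysis on the signs of the coefficients of $j_k,j_{k+1}$ (as the paper does), you use the pointwise bound $\sinh(x)\le \tfrac12 e^x$ on the Weyl chamber to reduce to a single exponential and then solve a linear program in the gap variables. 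This is cleaner and in fact gives a sharper exponent drop ($2\log X$ rather than the paper's $\log X$ in the even case). One small caution: your lower-bound sentence (``a bounded neighborhood of the extremal corner on which every gap $j_i-j_{i+1}$ exceeds a fixed positive constant'') is slightly imprecise, since at the corner itself all but the middle gap vanish; the paper has the same awkwardness. The fix is routine---integrate over a fixed box near the corner and use $\sinh(x)\ge x$ for the small gaps and $\sinh(x)\ge c\,e^x$ for the large ones---but you should state it this way.
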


Note that, given the definition of $k$ in the two cases in Lemma~\ref{middletwo}, whenever the mentioned ratios between singular values of $g$ are large enough, so are the ratios between the relevant pairs of the singular values of $g^{-1}$.  Hence we need only to prove the statements above for singular values of $g$.

\begin{proof}
Let $T=2\log\eta$ and consider, as $X\rightarrow\infty$, the ratio
$$\frac{|\{g\in B'_X(G)\;|\; j_k(g)-j_{k+1}(g)\geq T, \}|}{|\{g\in B'_X(G)\}|}$$
if $n=2k$ is even, and 
$$ \frac{|\{g\in B'_X(G)\;|\; j_k(g)-j_{k+1}(g)\geq T, j_{k+1}(g)-j_{k+2}(g)\geq T\}|}{|\{g\in B'_X(G)\}|}$$
if $n=2k+1$ is odd, where $j_i(g)$ is defined as in the previous section.  We replace now $G$ by $\textrm{SL}_n(\mathbb R)$, and the norm above with Haar measure, noting that, as in the proof of Theorem~\ref{toptwoth}, the ratio we get in this way will be asymptotic to the ratio above by Theorem~1.4 in \cite{EM} (see Theorem~\ref{emtheorem}) along with Lemmata~\ref{wellrounded1} and \ref{gapwellrounded}.

We consider all elements $\gamma\in\textrm{SL}_n(\mathbb R)$ such that $||\gamma||^2=\lambda_{\max}(\gamma^t\gamma)\leq X^2$ and $||\gamma^{-1}||\leq X^2$: in other words, in the above notation, consider the region $R$ is the convex polygon defined by
\begin{eqnarray}\label{polyR2}
e^{|j_1|}\leq X, && j_1\geq j_2\geq \cdots\geq j_n,\nonumber\\
e^{|j_2|}\leq X, && j_1+\cdots+j_n=0.\\
\vdots &&\nonumber\\
e^{|j_n|}\leq X, &&\nonumber
\end{eqnarray}
The first integral in question is
$$\int_{R} \left(\prod_{\lambda\in\Sigma^+}\sinh(\lambda(H))\right)dH$$
Again we expand this as a sum of integrals of exponentials.  If $n$ is even, we obtain
\begin{equation}\label{weirdfirstinteven}
C\cdot\int_{R}\sum_{\sigma\in S_n} \textrm{sgn}(\sigma) e^{\sum_{m=1}^{n/2}(n-2m+1)(j_{\sigma(m)}-j_{\sigma(n/2+m)})}dj_{n-1}\cdots dj_1
\end{equation}
for $n$ even, or, for $n$ odd,
\begin{equation}\label{weirdfirstintodd}
C\cdot\int_{R}\sum_{\sigma\in S_n} \minus\textrm{sgn}(\sigma)e^{\sum_{m=1}^{(n-1)/2}(n-2m+1)(j_{\sigma(m)}-j_{\sigma((n-1)/2+m)})}dj_{n-1}\cdots dj_1
\end{equation}
where $C=1/2^{\frac{n(n-1)}{2}}$.  Note that on $R$, the maximum of any one of the exponentials in the sums above is, depending on $\sigma$, is
\begin{equation}\label{maxeven}
X^{2n-2+2n-4+\cdots+2}
\end{equation}
or of smaller order both for $n$ even and odd.  When this maximum is achieved (i.e. when one considers an appropriate $\sigma$ in the sum above), it is achieved at the point $P=(\log X,\log X,\dots,\log X,-\log X, -\log X,\dots, -\log X)$ in the even case, and at $Q=(\log X,\log X,\dots,\log X,0,-\log X,-\log X,\dots, -\log X)$ in the odd case.  Since the exponentials are all exponentials of linear functions, and since $P$ and $Q$ are contained in $R$ (in the even and odd cases, respectively), we have that the maximum obtained on $R$ for any one of the exponentials in the sums above is precisely the expression in (\ref{maxeven}).

We now separate the two cases $n$ even and odd.

\noindent{\it Case 1:} $n=2k>2$ is even.

Let $f(j_1,\dots,j_{n-1})$ be the constant multiple of the sum of exponentials in (\ref{weirdfirstinteven}).  We will show that 

\begin{equation}\label{wholeevenasym}
\int_{R}f(j_1,\dots,j_{n-1})dj_{n-1}\dots dj_1 \sim \alpha X^{2(n-1) +2(n-3)+\cdots+2}=\alpha X^{n^2/2}
\end{equation}
for some constant $\alpha$.  First, note that $f(j_1,\dots,j_{n-1})$ is nonnegative on $R$, and hence an integral of $f$ over a subregion of $R$ will give a lower bound on this integral.  Consider the subregion $R'$ obtained by taking the intersection of $R$ with the region
$$j_{i-1}\geq j_i+M  \mbox{ for all $2\leq i \leq n$}$$
where $M=\log(n!+1)$.  Now, any exponential in the sum of exponentials defining $f$ has a maximum of order less than $cX^{n^2/2}$ where $c>0$ unless it is of the form
$$e^{(n-1)j_{\sigma(1)}+(n-3)j_{\sigma(2)}+\cdots+j_{\sigma(n/2)}-j_{\tau(n/2+1)}-3j_{\tau(n/2+2)}-\cdots-(n-1)j_{\tau(n)}}$$
where $\sigma,\tau\in S_{n/2}$, and hence we may replace $f$ in our computation by 
$$g(j_1,\dots, j_{n-1}):=\sum_{\sigma,\tau\in S_{n/2}}(\sgn(\sigma\tau) e^{(n-1)j_{\sigma(1)}+(n-3)j_{\sigma(2)}+\cdots+j_{\sigma(n/2)}-j_{\tau(n/2+1)}-3j_{\tau(n/2+2)}-\cdots-(n-1)j_{\tau(n)}}).$$
The region $R'$ was defined in such a way that $g$ is at least 
$$c\cdot e^{(n-1)j_{1}+(n-3)j_{2}+\cdots+j_{n/2}-j_{n/2+1}-3j_{n/2+2}-\cdots-(n-1)j_{n}}$$
on $R'$, for some positive constant $c.$ We hence show that the integral
\begin{equation}\label{reduceoneeven}
\int_{R'} c\cdot e^{(n-1)j_{1}+(n-3)j_{2}+\cdots+j_{n/2}-j_{n/2+1}-3j_{n/2+2}-\cdots-(n-1)j_{n}}dj_{n-1}\cdots dj_1\gg X^{n^2/2},
\end{equation}
thus proving the asymptotic in (\ref{wholeevenasym}), since the maximum of $f$ over $R$ is of order at most $X^{n^2/2}$ as discussed above.  To prove (\ref{reduceoneeven}), note that the exponential in the integral in $(\ref{reduceoneeven})$ is positive everywhere, and so it is bounded below by 
\begin{equation}\label{reduceoneeven2}
\int_{R'\cap B_{\epsilon'}} c\cdot e^{(n-1)j_{1}+(n-3)j_{2}+\cdots+j_{n/2}-j_{n/2+1}-3j_{n/2+2}-\cdots-(n-1)j_{n}}dj_{n-1}\cdots dj_1\gg X^{n^2/2},
\end{equation}
where $\epsilon'>0$ and 
$$B_{\epsilon'}:=[\log X-\epsilon',\log X]^{n/2}\times[-\log X,-\log X+\epsilon']^{n/2}.$$
Since the minimum of the exponential above over $R'\cap B_{\epsilon'}$ is easily seen to be $c' X^{n^2/2}$ for a constant $c'>0$ depending on $\epsilon'$ and $M$, we have that the integral in (\ref{reduceoneeven2}) is bounded below by $V(R'\cap B_{\epsilon'})\cdot c' X^{n^2/2}$, where $V(R'\cap B_{\epsilon'})$ denotes the area of $R'\cap B_{\epsilon'}$ which is at least a constant depending on $\epsilon'$ and $M$.  Thus we have that (\ref{reduceoneeven}) holds and so the asymptotic in (\ref{wholeevenasym}) is correct.

Let $T>0$ be fixed, and let $R_T$ denote the region defined by $j_{n/2}\leq j_{n/2+1}+T$.  We now show that
$$\int_{R\cap R_T} f(j_1,\dots,j_{n-1})dj_{n-1}\cdots dj_1$$
is of lower order than $X^{n^2/2}$, proving Lemma~\ref{middletwo} for even $n$.

To do this, consider any one of the exponentials in the sum defining $f$.  It is of the form
$$e^{(n-1)j_{\sigma(1)}+(n-3)j_{\sigma(2)}+\cdots+j_{\sigma(n/2)}-j_{\sigma(n/2+1)}-3j_{\sigma(n/2+2)}-\cdots-(n-1)j_{\sigma(n)}}$$
where $\sigma\in S_n$.  In the case that the coefficients of the $j_{n/2}$ and $j_{n/2+1}$ terms have opposite sign, the maximum value over $R\cap B_T$ of $\sum_{i\not=n/2,n/2+1} a_ij_i$ in the exponent is bounded above by $(\frac{n^2}{2}-a_{n/2}+a_{n/2+1})\log X$ if $a_{n/2+1}$ is negative, or $(\frac{n^2}{2}+a_{n/2}-a_{n/2+1})\log(X)$ if $a_{n/2}$ is negative.  Now consider the remaining part of the exponent,
\begin{equation}\label{gappedtwo}
a_{n/2}j_{n/2}+a_{n/2+1}j_{n/2+1}\leq (a_{n/2}+a_{n/2+1})j_{n/2}-(a_{n/2}+a_{n/2+1})T\leq (a_{n/2}+a_{n/2+1})j_{n/2}.
\end{equation}
Adding this to the upper bound on the maximum value of the rest of the terms in the exponent, we get an upper bound of
$$\left(\frac{n^2}{2}-1\right)\log X$$
for the exponent, and hence an upper bound of $X^{n^2/2-1}$ for the corresponding exponential in the sum defining $f$.  If the coefficients of the $j_{n/2}$ and $j_{n/2+1}$ terms have the same sign, then we get that the maximum value over $R\cap B_T$ of $\sum_{i\not=n/2,n/2+1} a_ij_i$ in the exponent is bounded above by $(\frac{n^2}{2}-a_{n/2}-a_{n/2+1}-2)\log X$, and adding the remaining part of the exponent in (\ref{gappedtwo}) we get an upper bound of
$$\left(\frac{n^2}{2}-2\right)\log X$$
for the exponent, and hence an upper bound of $X^{n^2/2-1}$ for the corresponding exponential in the sum defining $f$.  Hence we have that 
\begin{equation}\label{uppergap}
\int_{R\cap R_T} f(j_1,\dots,j_{n-1})dj_{n-1}\cdots dj_1\ll X^{n^2/2-1},
\end{equation}
which is of lower order than the asymptotic for the integral over $R$ in (\ref{wholeevenasym}).  This concludes the proof of the lemma for even $n$.

\noindent{\it Case 2:} $n=2k+1\geq 3$ is odd.

This case is extremely similar to the even case above, so we suppress all of the details, but note the key differences.  In this case, the maximum over $R$ of any given exponential in the sum defining $f$ is $X^{n(n-1)/2}$, obtained at the point $(\log X,\log X,\dots,\log X,0,-\log X,\newline -\log X,\dots,-\log X)$.  The problem of finding the asymptotic for the integral of $f$ over $R$ reduces to obtaining a suitable lower bound on the asymptotic for
\begin{equation}\label{reduceoneodd}
\int_{R'} c\cdot e^{(n-1)j_{1}+(n-3)j_{2}+\cdots+2j_{k}-2j_{k+2}-4j_{n/2+2}-\cdots-(n-1)j_{n}}dj_{n-1}\cdots dj_1
\end{equation}
over a region $R'$ defined very similarly as the one in the even case.  This lower bound is obtained by considering the integral over $R\cap B_{\epsilon'}$ where $B_{\epsilon'}$ is defined to be the region
$$B_{\epsilon'}:=[\log X-\epsilon',\log X]^{n/2}\times[-\epsilon',\epsilon']\times[-\log X,-\log X+\epsilon']^{n/2}.$$

Next, for $T_1,T_2>0$ we define $R_{T_1,T_2}$ to be the region defined by $j_k\geq j_{k+1}+T_1, j_{k+1}\geq j_{k+2}+T_2$, which is the analogue of $R_T$ in Case 1.  The proof that 
$$\int_{R\cap R_{T_1,T_2}} f(j_1,\dots,j_{n-1})dj_{n-1}\cdots dj_1\ll X^{n^2-2\log X}$$
is almost identical to the proof of (\ref{uppergap}) above, and hence we have that the statement in the lemma holds for odd $n$ as well.

\end{proof}

The spectral gap from the previous lemma gives us that our generators will be $\epsilon$-contracting with probability tending to $1$ in a suitable space.  Specifically, consider $v,w\in \mathbb P(\bigwedge^k\mathbb R^n)$.  We define a metric

\begin{equation}\label{projdistexterior}
d([v],[w])=\frac{||v\wedge w||}{||v||\cdot||w||},
\end{equation}
where $[v]$ denotes the line spanned by $v$, and $||v||$, $||w||$, and $||v\wedge w||$ are defined in a canonical way after fixing a basis
$$B:=\{e_{i_1}\wedge\cdots\wedge e_{i_k}\;|\; 1\leq i_1<i_2<\cdots<i_k\leq n\}$$
for $\bigwedge^k(\mathbb R^n)$.  For example, writing
$$v=\sum_{1\leq i_1<i_2<\cdots<i_k\leq n}v_{i_1,\dots,i_k}e_{i_1}\wedge\cdots\wedge e_{i_k},$$
we have
$$||v||^2=\sum_{1\leq i_1<i_2<\cdots<i_k\leq n}v_{i_1,\dots,i_k}^2.$$
In addition, we define an action of $K=\textrm{SO}_n(\mathbb R)$ on $\mathbb P(\bigwedge^k(\mathbb R^n))$ by considering its action on $\mathbb R^n$ with basis $\{e_1,\dots,e_n\}$.  The metric in (\ref{projdistexterior}) satisfies the following properties:
\begin{itemize}
\item Given a linear form $f:\mathbb \bigwedge^k(\mathbb R^n)\rightarrow \mathbb R$, we have
\begin{equation}\label{metricproperty1exterior}
d([v],[\ker f])=\frac{|f(v)|}{||f||\cdot ||v||}
\end{equation}
\item The group $K=\textrm{SO}_n(\mathbb R)$ acts isometrically on $\mathbb P(\bigwedge^k(\mathbb R^n))$ with the metric $d$.
\end{itemize}

\begin{lemma}\label{epscontractextpower}
Let $\epsilon>0$ and let $\gamma\in\mathrm{SL}_n(\mathbb R)$.  Let $a_i(\gamma)$ denote the $i$-th largest singular value of $\gamma$.  If $\frac{a_{k+1}(\gamma)}{a_{k}(\gamma)} \leq \epsilon^2$, then $\gamma$ is $\epsilon$-contracting when acting on $\mathbb P(\bigwedge^{k}(\mathbb R^n))$.

More precisely, let $B:=\{e_{i_1}\wedge\cdots\wedge e_{i_k}\;|\; 1\leq i_1<i_2<\cdots<i_k\leq n\}$, a basis for $\bigwedge^k(\mathbb R^n)$ (here $(e_1,\dots,e_n)$ is the canonical basis of $\mathbb R^n$).  Then, writing $\gamma = k_{\gamma}a_{\gamma}k_{\gamma}'$ , one can take $H_{\gamma}$ to be the projective hyperplane spanned by $\{k_{\gamma}'^{-1}(\mathbf{v})\;|\; \mathbf v\in B\backslash e_1\wedge e_2\wedge\cdots\wedge e_k\}$, and $v_{\gamma}= k_{\gamma}(e_1\wedge e_2\wedge\cdots\wedge e_k)$.  Then we have that $\gamma$ maps the outside of the $\epsilon$-neighborhood of $H_{\gamma}$ into the $\epsilon$-ball around $v_{\gamma}$.

\end{lemma}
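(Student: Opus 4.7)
The plan is to carry out an argument that directly generalizes the proof of Theorem~\ref{gap}, replacing $\mathbb{R}^n$ with the vector space $W := \bigwedge^k(\mathbb{R}^n)$ and using the fact that the Cartan decomposition of $\gamma$ lifts naturally to $W$. Specifically, writing $\gamma = k_\gamma a_\gamma k_\gamma'$, the induced action on $W$ factors as $(\bigwedge^k k_\gamma)(\bigwedge^k a_\gamma)(\bigwedge^k k_\gamma')$; here $\bigwedge^k k_\gamma$ and $\bigwedge^k k_\gamma'$ are orthogonal transformations of $W$ (they preserve the inner product on $W$ induced by the basis $B$), and hence act isometrically on $\mathbb{P}(W)$, while $\bigwedge^k a_\gamma$ is diagonal in the basis $B$ with eigenvalue $\prod_{j \in I} a_j(\gamma)$ on $e_I = e_{i_1} \wedge \cdots \wedge e_{i_k}$. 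The monotonicity $a_1(\gamma) \geq a_2(\gamma) \geq \cdots \geq a_n(\gamma) > 0$ shows that the largest eigenvalue of $\bigwedge^k a_\gamma$ is $a_1 \cdots a_k$, attained uniquely on $e_1 \wedge \cdots \wedge e_k$, and that the second-largest is $a_1 \cdots a_{k-1} a_{k+1}$; in particular, the ratio of the second-to-top singular value of the action on $W$ is exactly $a_{k+1}(\gamma)/a_k(\gamma)$.

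By the isometry property it suffices to prove $\epsilon$-contraction for the diagonal transformation $\bigwedge^k a_\gamma$ with attracting point $[e_1 \wedge \cdots \wedge e_k]$ and repelling hyperplane $H_0 := \mathrm{span}(B \setminus \{e_1 \wedge \cdots \wedge e_k\})$; the full conclusion with $H_\gamma$ and $v_\gamma$ as stated then follows by conjugating back by $\bigwedge^k k_\gamma'$ and $\bigwedge^k k_\gamma$. So fix $v \in W$ with $d([v], H_0) > \epsilon$. By the linear-form characterization (\ref{metricproperty1exterior}), applied to the coordinate functional $f(u) = u_{1,\ldots,k}$ (whose kernel is $H_0$ and whose operator norm is $1$), this means $|v_{1,\ldots,k}|/||v|| > \epsilon$. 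Writing $w := (\bigwedge^k a_\gamma) v$, the component of $w$ along $e_1 \wedge \cdots \wedge e_k$ has absolute value $a_1 \cdots a_k \cdot |v_{1,\ldots,k}| \geq a_1 \cdots a_k \cdot \epsilon\, ||v||$, while the squared norm of the part of $w$ orthogonal to $e_1 \wedge \cdots \wedge e_k$ is $\sum_{I \neq \{1,\ldots,k\}} \bigl(\prod_{j \in I} a_j\bigr)^2 v_I^2 \leq (a_1 \cdots a_{k-1} a_{k+1})^2 ||v||^2$. Since the projective distance from $[w]$ to $[e_1 \wedge \cdots \wedge e_k]$ is the sine of the angle between them, we obtain
\[
d\bigl([w],\,[e_1 \wedge \cdots \wedge e_k]\bigr) \,\leq\, \frac{a_1 \cdots a_{k-1} a_{k+1}\,||v||}{a_1\cdots a_k \cdot \epsilon\,||v||} \,=\, \frac{1}{\epsilon}\cdot\frac{a_{k+1}(\gamma)}{a_k(\gamma)} \,\leq\, \epsilon,
\]
where the final inequality uses the hypothesis $a_{k+1}(\gamma)/a_k(\gamma) \leq \epsilon^2$. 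Applying the isometry $\bigwedge^k k_\gamma$ then yields $d([\gamma v], v_\gamma) \leq \epsilon$, as required.

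This is essentially a transcription of the argument behind Theorem~\ref{gap} into the setting of the exterior power, once one commits to identifying the top/second-top singular-value gap of $\bigwedge^k \gamma$ with the gap $a_{k+1}(\gamma)/a_k(\gamma)$ of $\gamma$ itself. The main point requiring a little care is the interpretation of the metric (\ref{projdistexterior}) and the linear-form distance formula (\ref{metricproperty1exterior}) in the high-dimensional Euclidean space $W$ rather than in $\mathbb{R}^n$; but since $W$ is itself Euclidean with the inner product induced by the basis $B$, both reduce to standard facts about projective geometry on Euclidean vector spaces, and so no substantive new obstacle arises.
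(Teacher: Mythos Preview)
Your proof is correct and follows essentially the same approach as the paper's own proof: reduce to the diagonal case via the isometric action of $K$, use the linear-form distance formula to read off $|v_{1,\ldots,k}|/\|v\|\geq\epsilon$, and then bound $d([\gamma v],[e_1\wedge\cdots\wedge e_k])$ by $(a_1\cdots a_{k-1}a_{k+1}\|v\|)/(a_1\cdots a_k|v_{1,\ldots,k}|)\leq a_{k+1}/(a_k\epsilon)\leq\epsilon$. Your exposition of how the Cartan decomposition lifts to $\bigwedge^k(\mathbb R^n)$ and identifies the top two singular values there is a bit more detailed than the paper's, but the argument is the same.
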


\begin{proof}
The proof of this is a straightforward generalization of the proof of Theorem~\ref{gap}, or Proposition~3.1 in \cite{BG}.  Suppose that $\frac{a_{k+1}(\gamma)}{a_{k}(\gamma)} \leq \epsilon^2$.  Since $K$ acts by isometries on $\mathbb P(\bigwedge^k(\mathbb R^n))$, we may reduce to the diagonal case,
$$\gamma=a_{\gamma}=\textrm{diag}(a_1,\dots,a_n).$$
Suppose $[v]$ is outside the $\epsilon$-neighborhood of $H_{\gamma}$: i.e., by (\ref{metricproperty1exterior}), suppose
$$d([v],H_{\gamma})=\frac{|v_{1,2,\dots,k}|}{||v||}\geq\epsilon,$$
where we write
$$v=\sum_{1\leq i_1<i_2<\cdots<i_k\leq n}v_{i_1,\dots,i_k}e_{i_1}\wedge\cdots\wedge e_{i_k}.$$
Then 
$$d([\gamma v],[e_1\wedge e_2\wedge\cdots\wedge e_k])=\frac{||\gamma v\wedge e_1\wedge e_2\wedge\cdots\wedge e_k||}{||\gamma v||}\leq \frac{a_{k+1}||v||}{a_{k}|v_{1,2,\dots,k}|}\leq \epsilon,$$
since $||\gamma v\wedge e_1\wedge e_2\wedge\cdots\wedge e_k||\leq a_1\cdots a_{k-1}a_{k+1}||v||$, and $||\gamma v||\geq a_1\cdots a_{k}|v_{1,2,\dots,k}|$.
\end{proof}

Lemma~\ref{epscontractextpower} together with Lemma~\ref{middletwo} immediately imply the following:
\begin{lemma}\label{contracts}
Let $G=\mathrm{SL}_n(\ZZ)$, fix $0<\epsilon\leq 1/4$, and let $\mu'_X$ be as above.  Then we have that 
$$\lim_{X\rightarrow\infty} \, \mu'_X(\{g\in G\, | \, g \mbox{ is $\epsilon$-very contracting in $\mathbb P(\bigwedge^k(\mathbb R^n))$})=1,$$
where $k=n/2$ for $n$ even and $k=(n-1)/2$ for $n$ odd. 
\end{lemma}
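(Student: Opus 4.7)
The plan is to combine directly the spectral-gap statement in Lemma~\ref{middletwo} with the contraction criterion in Lemma~\ref{epscontractextpower}; essentially all of the work has already been done. The main bookkeeping is just to match the parameters.

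First I would fix $\epsilon\in(0,1/4]$ and choose $\eta>4$ large enough that $1/\eta^2\leq\epsilon^2$ (taking $\eta=\max(4,1/\epsilon)$ suffices). Apply Lemma~\ref{middletwo} with this $\eta$ and let $k=n/2$ if $n$ is even or $k=(n-1)/2$ if $n$ is odd. We obtain a set $E_X\subset B_X'(G)$ with $\mu_X'(E_X)\to 1$ such that every $g\in E_X$ satisfies
\begin{equation*}
\frac{a_k(g)}{a_{k+1}(g)}\geq \eta^2\quad\text{and}\quad \frac{a_k(g^{-1})}{a_{k+1}(g^{-1})}\geq \eta^2,
\end{equation*}
which is the same as $a_{k+1}(g)/a_k(g)\leq\epsilon^2$ and $a_{k+1}(g^{-1})/a_k(g^{-1})\leq\epsilon^2$. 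In the odd case Lemma~\ref{middletwo} also gives the gap at position $(k+1,k+2)$ for both $g$ and $g^{-1}$, but this extra information is not needed for the present claim.

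Next, by Lemma~\ref{epscontractextpower} the singular-value inequality $a_{k+1}(\gamma)/a_k(\gamma)\leq\epsilon^2$ implies that $\gamma$ is $\epsilon$-contracting on $\mathbb{P}(\bigwedge^k(\mathbb R^n))$, with explicit attracting point and repelling hyperplane given by the $KAK$ decomposition. Applying this to both $g$ and $g^{-1}$ for each $g\in E_X$, we conclude that every element of $E_X$ is $\epsilon$-very contracting on $\mathbb{P}(\bigwedge^k(\mathbb R^n))$. Hence $E_X$ is contained in the set appearing in Lemma~\ref{contracts}, and letting $X\to\infty$ gives the desired limit $1$.

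There is no real obstacle: the only subtle point is that the notion of ``$\epsilon$-very contracting'' requires the same $\epsilon$ to work simultaneously for $g$ and $g^{-1}$, but this is automatic because Lemma~\ref{middletwo} delivers the singular-value gap for $g$ and $g^{-1}$ at the same time (and with the same constant $\eta^2$). In particular, the symmetrized ball $B_X'(G)$ is exactly what makes this argument go through, since over the usual ball $B_X$ the inverse would typically fail to have a comparable spectral gap.
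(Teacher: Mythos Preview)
Your proposal is correct and follows exactly the route the paper takes: the paper simply states that Lemma~\ref{epscontractextpower} together with Lemma~\ref{middletwo} ``immediately imply'' Lemma~\ref{contracts}, and you have spelled out precisely that implication. The only quibble is the parenthetical $\eta=\max(4,1/\epsilon)$, which at $\epsilon=1/4$ gives $\eta=4$ rather than $\eta>4$; but any $\eta>4$ already satisfies $1/\eta^2<1/16\leq\epsilon^2$, so this is harmless.
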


We now show that generically our chosen generators will be $r$-very proximal.

\begin{lemma}\label{prox}
Let $G=\mathrm{SL}_n(\ZZ)$, where $n>2$, and let $\mu'_X$ be as before.  For $g\in G$, let $g=k_ga_gk_g'$ be a Cartan decomposition of $g$ and let $k=n/2$ if $n$ is even, and $k=(n-1)/2$ if $n$ is odd.  Denote by $H_g$ the projective hyperplane in $\mathbb P(\bigwedge^k(\mathbb R^n))$ spanned by $\{k_{g}'^{-1}(\mathbf{v})\;|\; \mathbf v\in B\backslash e_1\wedge e_2\wedge\cdots\wedge e_k\}$ where $B$ is as in Lemma~\ref{epscontractextpower}, and let $v_g=k_{g}(e_1\wedge e_2\wedge\cdots\wedge e_k)$.  Then, as $\epsilon'$ tends to $0$, we have
$$\lim_{X\rightarrow\infty}\, \mu'_X(\{g\in G\, | \, d(v_g,H_g)\leq 2\epsilon'\})\ll \epsilon'.$$
\end{lemma}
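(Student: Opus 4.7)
The plan is to reduce the probability in question, via equidistribution of the $K$-components of the Cartan decomposition, to a Haar-measure estimate on $K = \mathrm{SO}_n(\mathbb{R})$, and then to bound a neighborhood of a Schubert-type divisor. The crucial observation is that, writing $g = k_g a_g k_g'$, the attracting point $v_g = k_g(e_1 \wedge \cdots \wedge e_k)$ depends only on the left $K$-factor while the hyperplane $H_g = k_g'^{-1}(H_0)$ depends only on the right $K$-factor, where $H_0 \subset \mathbb{P}(\bigwedge^k(\mathbb{R}^n))$ is the projective hyperplane spanned by $B \setminus \{e_1 \wedge \cdots \wedge e_k\}$. Using the $K$-invariance of the metric (\ref{projdistexterior}) one rewrites
\[
d(v_g, H_g) \;=\; d\bigl((k_g' k_g)(e_1 \wedge \cdots \wedge e_k),\, H_0\bigr).
\]

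First I would apply Theorem~1.4 of \cite{EM} (see Theorem~\ref{emtheorem}) together with the well-roundedness of the modified balls $B_X'$ proved in Section~\ref{wellroundedsec} to replace the normalized counting measure $\mu_X'$ on $B_X'(G)$ by the normalized Haar measure on $B_X'(\mathrm{SL}_n(\mathbb{R}))$. Because the condition $\|g\|, \|g^{-1}\| \leq X$ depends only on the singular values of $g$, i.e.\ only on the $A$-component, the $KAK$ decomposition of Haar measure as $dk \cdot \delta(a)\,da \cdot dk'$ (cf.~(\ref{haar})) shows that, conditioned on $g \in B_X'$, the pair $(k_g, k_g')$ is distributed as the product Haar probability measure $\mu_K \otimes \mu_K$ on $K \times K$, independently of $X$. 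Hence the asymptotic probability sought equals
\[
(\mu_K \otimes \mu_K)\bigl(\{(k, k') \in K \times K :\, d(k' k (e_1 \wedge \cdots \wedge e_k),\, H_0) \leq 2\epsilon'\}\bigr).
\]

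Next, left-invariance of $\mu_K$ together with Fubini (substituting $u = k' k$ in the $k$-integral for each fixed $k'$) reduces the above to the single-variable estimate
\[
\mu_K\bigl(\{u \in K :\, d(u(e_1 \wedge \cdots \wedge e_k),\, H_0) \leq 2\epsilon'\}\bigr) \;\ll\; \epsilon'.
\]
By (\ref{metricproperty1exterior}) and orthogonality of $u$, this distance equals $|m(u)|$, where $m(u)$ is the upper-left $k \times k$ minor of $u$ (equivalently, the Pl\"ucker coordinate $p_{1,\ldots,k}$ of the $k$-plane $u \cdot \mathrm{span}(e_1,\ldots,e_k) \in \mathrm{Gr}(k,n)$ pulled back to $\mathrm{SO}_n(\mathbb{R})$). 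Since $m$ is a non-constant real-analytic function on the compact manifold $\mathrm{SO}_n(\mathbb{R})$ whose gradient is nonzero at a generic point of its zero locus $\{m = 0\}$, a standard covering/coarea argument---pick finitely many charts around the zero locus in which $m$ serves as one coordinate function---yields $\mu_K(\{|m| \leq 2\epsilon'\}) \ll \epsilon'$.

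The main obstacle is the first step, namely verifying that the modified balls $B_X'$ form a well-rounded family in the sense of Eskin-McMullen so that the counting measure may be replaced by Haar; this is addressed in Section~\ref{wellroundedsec}. Once that is in hand, the remaining work is elementary Haar-measure geometry, pivoting on the non-degeneracy of a single Pl\"ucker coordinate on $\mathrm{SO}_n(\mathbb{R})$.
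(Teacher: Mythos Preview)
Your reduction is cleaner than the paper's and essentially correct, but one step needs a stronger input than you invoke. You cite Theorem~1.4 of \cite{EM} to ``replace the normalized counting measure $\mu_X'$ by the normalized Haar measure on $B_X'(\mathrm{SL}_n(\mathbb{R}))$''. Eskin--McMullen's Theorem~1.4, together with well-roundedness of $B_X'$, gives only the asymptotic \emph{total} count $|B_X'(G)|\sim c\cdot m(B_X')$; it does not by itself tell you that the $K\times K$ components $(k_g,k_g')$ of lattice points equidistribute toward Haar $\times$ Haar, which is what you need in order to evaluate $\mu_X'$ on a set defined by a condition on $(k_g,k_g')$. The correct reference is the bisector equidistribution of Gorodnik--Oh (Theorem~1.6 of \cite{GO}), exactly the tool the paper uses; once you cite that instead, your pushforward-to-$K\times K$ step is justified, and the rest of your argument (Fubini on $K\times K$, the identification $d(u\cdot e_1\wedge\cdots\wedge e_k,H_0)=|m(u)|$ with $m$ the top-left $k\times k$ minor, and the coarea bound $\mu_K(|m|\le 2\epsilon')\ll\epsilon'$) goes through.

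By contrast, the paper does not make the observation that $d(v_g,H_g)$ depends only on the product $k_g'k_g$. Since the ``bad'' set $\Omega\subset K\times K$ is not a product, the paper covers the Grassmannian orbit $\mathcal{O}=K\cdot(e_1\wedge\cdots\wedge e_k)$ by $\sim(\epsilon')^{-d}$ small balls $B_i$, encloses $\Omega$ in the union of product sets $S_i\times S_i'$, applies the Gorodnik--Oh bisector theorem to each, and sums the bound $\mu(S_i)\mu(S_i')\ll (\epsilon')^{d}\cdot\epsilon'$ over $i$. Your route bypasses this covering entirely: after passing to Haar $\times$ Haar you absorb the diagonal condition into a single variable $u=k'k$ and land on one scalar inequality $|m(u)|\le 2\epsilon'$ on $\mathrm{SO}_n$. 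This is genuinely simpler and makes the geometry more transparent; the paper's approach has the mild advantage that it only ever feeds product sets into \cite{GO} and so matches the literal hypotheses of that theorem without an additional density/approximation step.
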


Since we have that for any fixed $0<\epsilon'<1/4$, the probability as $X$ tends to infinity that a randomly chosen $g\in G$ is $\epsilon'$-very
contracting in $\mathbb P(\bigwedge^k(\mathbb R^n))$ tends to $1$, the above gives us that the probability that such an element is $(r,\epsilon')$-very proximal for some $0<\epsilon'<1/4$ also tends to $1$ as $X$ tends to infinity.

\begin{proof}

Let $\epsilon=2\epsilon'$, and for $\kappa\in K$ let $v_{\kappa}:=\kappa(e_1\wedge e_2\wedge\cdots\wedge e_k)$, and let $H_{\kappa}$ denote the hyperplane spanned by $\{\kappa^{-1}(\mathbf{v})\;|\; \mathbf v\in B\backslash e_1\wedge e_2\wedge\cdots\wedge e_k\}$ in $\mathbb P(\bigwedge^k(\mathbb R^n))$.  Let $\Omega\subset K^2$ denote the set of pairs $(\kappa,\kappa')$ such that $d(H_{\kappa'},v_{\kappa})<\epsilon$, where $d$ is the projective distance defined in (\ref{projdistexterior}).  Let $S_{\Omega, X}:=\{\kappa a\kappa' \;|\; (\kappa,\kappa')\in\Omega, \, ||a||<X, ||a^{-1}||>1/X\}$.  To prove Lemma~\ref{prox}, we use Theorem~1.6 from \cite{GO} to obtain an upper bound on 
$$\#(\textrm{SL}_n(\mathbb Z)\cap S_{\Omega, X}).$$
Note that this theorem of \cite{GO} applies to a well-rounded sequences of growing regions.  Hence we show in Lemma~\ref{gapwellrounded} that the sequence of sets $\{g\in\textrm{SL}_n(\mathbb Z)\; |\; ||g||<X, ||g^{-1}||>1/X, a_g(k+1)/a_g(k)<\epsilon'\}_X$ is indeed well-rounded.  With this in mind, let $\mu$ denote the normalized Haar measure on $K$.    

 Consider the orbit $\mathcal O$ of $K$ acting on $e_1\wedge e_2\wedge\cdots\wedge e_k$.  It is a smooth compact manifold, and we denote its dimension by $d$ (in fact, this orbit is the Grassmanian, whose dimension is $k(n-k)$, but we do not need this).  We cover $\mathcal O$ with $\ell= C_V/(\epsilon')^d$ $\epsilon$-balls $B_i$ of full dimension, i.e. dimension of the whole projective space $\mathbb P(\bigwedge^k(\mathbb R^n))$, such that whenever $p\in \mathcal O$ and $q$ is a point with $d(p,q)<\epsilon$, we have that $p$ and $q$ are both contained in some one ball in this cover. Here $C_V$ is a constant depending on the volume of $\mathcal O$.  In particular, any point $v_{\kappa}$ (defined above) is in $\mathcal O$, and if a hyperplane $H_{\kappa'}$ is less than $\epsilon$ away from $v_\kappa$ then it intersects some ball from this cover which contains $v_{\kappa}$.

Let $S_i:=\{\kappa\in K\; |\; v_{\kappa}\in B_i\}$, and let $S_i':=\{\kappa\in K\; |\; H_{\kappa} \cap B_i \mbox{ is nonempty}\}$.  Then 

$$\textrm{SL}_n(\mathbb Z)\cap S_{\Omega, X}\subset \bigcup_{1\leq i \leq \ell} \{g\in \textrm{SL}_n(\mathbb Z)\;|\; ||g||, ||g^{-1}||<X, \; g=\kappa a \kappa', \kappa\in S_i, \kappa'\in S_i'\},$$

and hence, by Theorem~1.6 from \cite{GO}, we have

$$\frac{\#(\textrm{SL}_n(\mathbb Z)\cap S_{\Omega, X})}{\#\{g\in\textrm{SL}_n(\mathbb Z)\;|\; ||g||, ||g^{-1}||<X\}} \leq  \sum_{1\leq i \leq\ell} \mu(S_i)\mu(S_i').$$

Now, as $\epsilon'\rightarrow 0$, we have that $\mu(S_i)\sim\epsilon'^d$, and $\mu(S_i')\leq \epsilon'$ since the dimension of the orbit under $K$ of any point in the hyperplane $H_{I}$, where $I$ is identity, is at least $1$.  Hence we have that, as $\epsilon'$ tends to $0$,

$$\frac{\#(\textrm{SL}_n(\mathbb Z)\cap S_{\Omega, X})}{\#\{g\in\textrm{SL}_n(\mathbb Z)\;|\; ||g||, ||g^{-1}||<X\}} \ll \epsilon'$$

\end{proof}

In fact, the above argument shows more than what is stated in the Lemma: it also shows that the probability that two elements $g_1,g_2$ chosen uniformly at random out of a modified ball $B_X'$ in $\textrm{SL}_n(\mathbb Z)$ are associated to points $v_{g_1}, v_{g_2}$ and hyperplanes $H_{g_1}, H_{g_2}$ as defined above where any two of these are at most $\epsilon'$ apart tends to $\epsilon'$ as $X\rightarrow\infty$.  Putting this together with  Lemma~\ref{middletwo} and Lemma~\ref{prox}, we have that 
$$\lim_{X\rightarrow\infty}\, \mu_X(\{(g_1,g_2)\in G^2\, | \, \Gamma(g_1,g_2) \mbox{ is free}\})=1$$
where $G=\textrm{SL}_n(\mathbb Z)$ and $n>2$, proving Proposition~\ref{freesln}.  Since free subgroups of $\textrm{SL}_n(\mathbb Z)$ are infinite index in $\textrm{SL}_n(\mathbb Z)$, and since the generic finitely generated subgroup of $\textrm{SL}_n(\mathbb Z)$ is Zariski dense in $\SLn(\mathbb C)$ by \cite{R2}, Theorem~\ref{thinsln} follows.
 
\noindent{\bf{Remark:}} The arguments above are all easily extended to groups generated by $k$ elements, where $k>2$ is fixed.

\section{Well Roundedness}\label{wellroundedsec}

In the preceding section, we use heavily the following result from \cite{EM}.

\begin{thm}[Theorem~1.4, Eskin-Mcmullen \cite{EM}]\label{emtheorem}
Let $G$ be a connected semisimple Lie group with finite center and let $H <G$ be a closed subgroup such that $V=G/H$ is an affine symmetric space.  Let $\Gamma$ be a lattice in $G$, and let $v$ denote the coset $[H]$. For any well-rounded sequence, the cardinality of the number of points of $\Gamma v$ which lie in $B_n$, grows like the volume of $B_n$: asymptotically,
$$|\Gamma v\cap B_n|\sim \frac{m((\Gamma\cap H)\backslash H}{\Gamma\backslash G}m(B_n).$$
\end{thm}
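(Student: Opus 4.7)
The plan is to follow the ergodic-theoretic strategy introduced by Duke-Rudnick-Sarnak and refined by Eskin-McMullen: convert the counting function into an integral of a smoothed $\Gamma$-periodic function on $\Gamma \backslash G$, evaluate it asymptotically using mixing of the $H$-action on $\Gamma \backslash G$, and use well-roundedness to absorb the smoothing error.

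Write $\Gamma_H = \Gamma \cap H$ and $v = eH \in V$. The function $F_n(g) := \#\{\gamma \in \Gamma/\Gamma_H : \gamma g v \in B_n\}$ on $\Gamma \backslash G$ is right $H$-invariant and satisfies $F_n(e) = |\Gamma v \cap B_n|$. First I would fix a small symmetric neighborhood $\mathcal{U}$ of the identity in $G$ and a nonnegative bump $\psi$ supported in $\mathcal{U}$ with $\int_G \psi \, dm_G = 1$. Integrating $F_n$ against the $\Gamma$-periodization $\hat\psi(\Gamma g) := \sum_{\gamma \in \Gamma} \psi(\gamma g)$ and unfolding yields
$$\int_{\Gamma \backslash G} \hat\psi(g)\, F_n(g)\, dg \;=\; \int_G \psi(g)\, F_n(g)\, dm_G(g),$$
which, since $\psi$ is concentrated near the identity and $F_n$ is right-$H$-invariant, can be compared to $F_n(e)$ up to an error supported in a $\mathcal{U}$-neighborhood of $\partial B_n$.

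The analytic heart of the proof is to show that, after rewriting the left-hand side as an average along the $H$-orbit of a typical base point in $\Gamma\backslash G$, this integral equidistributes to Haar measure. This is supplied by Howe-Moore mixing for the semisimple group $G$ with finite center, or equivalently by the wavefront lemma of Eskin-McMullen: translates $x_n \cdot (\Gamma_H \backslash H)$ become equidistributed in $\Gamma \backslash G$ as $x_n$ escapes to infinity transversally to $H$. Applied along a family of translates covering $B_n v$, this produces the main term $(m((\Gamma\cap H) \backslash H)/m(\Gamma \backslash G)) \cdot m(B_n)$.

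The hard part is converting the smoothed asymptotic back into a sharp count for $\mathbf{1}_{B_n}$. The smoothing error sits inside a $\mathcal{U}$-collar of $\partial(B_n v)$; well-roundedness asserts exactly that the volume of such collars is $o(m(B_n))$ as $\mathcal{U}$ shrinks, uniformly in $n$, permitting a diagonal choice $\mathcal{U}_n \downarrow \{e\}$. The main obstacle is the coordination between the equidistribution step and the boundary-control step: Howe-Moore mixing is qualitative rather than quantitative, so one cannot simply freeze $\mathcal{U}$ and take $n \to \infty$, and one must verify that $\mathcal{U}_n$ shrinks slowly enough that the equidistribution still applies. This interplay between qualitative mixing and uniform boundary control is precisely what the well-roundedness hypothesis is designed to handle.
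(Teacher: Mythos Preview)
The paper does not prove this theorem at all: it is quoted verbatim as Theorem~1.4 of Eskin--McMullen \cite{EM} and used as a black box in Section~\ref{wellroundedsec}. There is therefore no ``paper's own proof'' to compare against. Your sketch is a reasonable outline of the original Eskin--McMullen argument (unfolding, mixing via Howe--Moore and the wavefront lemma, and well-roundedness to pass from smoothed to sharp counts), but that comparison lies outside the present paper.
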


We now prove that the two sequences of growing regions which are featured in the previous section are indeed well-rounded in the following sense (see \cite{EM}):

\begin{defn}\label{wellroundeddefn} Let $\SLn(\mathbb R)=KAK$ be the Cartan decompostition, and let $\{A_i\}_{i\geq1}$ be a sequence of subsets of $A$ such that the volume of $KA_iK$ tends to infinity as $i\rightarrow\infty$.  The sequence of sets $\{KA_iK\}_{i\geq1}$ is \emph{well-rounded} if for all $\epsilon'>0$ there exists a neighborhood $U$ of identity in $\textrm{SL}_n(\mathbb R)$ such that 
\begin{equation}\label{wellrounded}
(1-\epsilon')m\left(\bigcup_{g\in U} gKA_iK\right)< m(KA_iK)< (1+\epsilon')m\left(\bigcap_{g\in U} gKA_iK\right).
\end{equation}
for all $i$.
\end{defn}

Proving well-roundedness is necessary in order to use the following theorem of Eskin-Mcmullen \cite{EM} to pass between computing volumes of regions in $\textrm{SL}_n(\mathbb R)$ and counting points in $\textrm{SL}_n(\mathbb Z)$ throughout the paper.  It is also necessary in using the equidistribution result of Gorodnick-Oh \cite{GO} in proving Lemma~\ref{prox}.  Throughout this section, we assume $n>2$.

Note that for our purposes in Theorem~\ref{emtheorem} to obtain an asymptotic count of the number of points in $|\Gamma v\cap B_n|$, we need only show that $\{B_n\}_n>N$ is a well-rounded sequence for some $N\in\mathbb N$.

For $T\in \mathbb R$, let
$$C_T':=\{{\mbox{diag}}(\alpha_1,\dots, \alpha_n)\; |\; \alpha_i\in\mathbb R, T\geq\alpha_1\geq\cdots\geq \alpha_{n}\geq 1/T,\prod_i \alpha_i=1\},$$
and, fixing $\beta>16$, let
\begin{equation}\label{weirdballgap1}
C_{T,\beta}':=\{{\mbox{diag}}(\alpha_1,\dots, \alpha_n)\; |\; \alpha_i\in\mathbb R, T\geq\alpha_1\geq\cdots\alpha_{n/2}\geq\beta\alpha_{n/2+1}\geq\cdots\geq \alpha_{n}\geq 1/T,\prod_i \alpha_i=1\}\nonumber
\end{equation}
if $n$ is even, and let
\begin{eqnarray}\label{weirdballgap2}
C_{T,\beta}'&:=&\{{\mbox{diag}}(\alpha_1,\dots, \alpha_n)\; |\; \alpha_i\in\mathbb R,\nonumber \\
&&T\geq\alpha_1\geq\cdots\alpha_{(n-1)/2}\geq\beta\alpha_{(n+1)/2}\geq\beta^2\alpha_{(n+3)/2}>\alpha_{(n+3)/2}\geq\cdots\geq \alpha_{n}\geq 1/T,\prod_i \alpha_i=1\}\nonumber
\end{eqnarray}
if $n$ is odd.  In \cite{EM} it is shown that the sequence $\{KC_{T}K\}$ of regions whose volume tends to infinity as $T\rightarrow\infty$ is well-rounded.

We show that this is true for the other sets above as well.  We begin with the sequence $\{KC_{T}'K\}$.

\begin{lemma}\label{wellrounded1}
Let $C_T'$ be defined as above.  Then the sequence $\{KC_{T}'K\}$ of regions (whose volume tends to infinity as $T\rightarrow\infty$) is well-rounded.
\end{lemma}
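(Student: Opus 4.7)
The strategy is to combine a continuity argument for singular values with the observation that $m(KC_T'K)$ is asymptotically a pure power of $T$; a small multiplicative perturbation of $T$ then produces only a small multiplicative change in the measure of $KC_T'K$, which yields well-roundedness.

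First I would invoke the singular-value inequality $\sigma_n(g)\,\sigma_i(x) \leq \sigma_i(gx) \leq \sigma_1(g)\,\sigma_i(x)$, together with $\sigma_1(g)=\|g\|$ and $\sigma_n(g)=1/\|g^{-1}\|$. Set $U = \{g \in \SLn(\mathbb R) : \|g\|, \|g^{-1}\| < e^{\epsilon}\}$, which is a symmetric neighborhood of the identity. For every $x \in KC_T'K$ and every $g \in U$, the singular values of $gx$ all lie in $[e^{-\epsilon}/T,\,e^{\epsilon}T]$; since the diagonal entries of the Cartan $A$-factor of any element of $\SLn(\mathbb R)$ are its singular values in the appropriate order, this immediately gives
$$K C_{T e^{-\epsilon}}' K \;\subseteq\; \bigcap_{g \in U} g\,K C_T' K \;\subseteq\; \bigcup_{g \in U} g\,K C_T' K \;\subseteq\; K C_{T e^{\epsilon}}' K.$$

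Next I would estimate $m(KC_T'K) = c_0 \int_{C_T'}\prod_{i<k} \sinh(j_i - j_k)\,dH$ by the expansion-in-exponentials technique of the proof of Lemma~\ref{middletwo}. Exactly as in (\ref{wholeevenasym}) and the odd-$n$ counterpart discussed in Case~2 of that proof, the dominant exponential $e^{2\rho(H)}$ is maximized on $C_T'$ at the vertex $(\log T,\ldots,\log T,-\log T,\ldots,-\log T)$ for even $n$, respectively $(\log T,\ldots,\log T,0,-\log T,\ldots,-\log T)$ for odd $n$. A Laplace-type estimate on a small coordinate box around this vertex, paired with the global upper bound coming from the full sinh expansion, yields
$$m(KC_T'K) \;\sim\; c_n\, T^{r(n)} \qquad (T \to \infty)$$
for some positive constants $c_n, r(n)$. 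Consequently $m(KC_{Te^{\pm\epsilon}}'K)/m(KC_T'K) \to e^{\pm\epsilon\,r(n)}$ as $T \to \infty$. Given $\epsilon' > 0$, I would then choose $\epsilon$ so small that $e^{\epsilon r(n)}<(1-\epsilon')^{-1}$ and $e^{-\epsilon r(n)}>(1+\epsilon')^{-1}$; the containments above together with this asymptotic then force the well-roundedness inequality (\ref{wellrounded}) for all sufficiently large $T$, which (by the remark preceding this lemma) is enough for the applications to Theorem~\ref{emtheorem} and to the Gorodnik--Oh result used in Lemma~\ref{prox}.

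The only real issue is the uniform-in-$T$ control of the ratio $m(KC_{Te^{\pm\epsilon}}'K)/m(KC_T'K)$; this is precisely what the pure-power asymptotic $c_n T^{r(n)}$ provides, and the Laplace-type estimate needed to establish it is essentially identical to the one already carried out in the proof of Lemma~\ref{middletwo}. Beyond that bookkeeping, the argument is purely formal manipulation of the containments above, so I do not anticipate any additional conceptual obstacle.
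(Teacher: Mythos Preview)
Your proposal is correct and follows the same overall architecture as the paper's proof: sandwich $KC_T'K$ under small left-perturbations between $KC_{T'}'K$ for nearby parameters $T'$, then invoke the pure-power asymptotic $m(KC_T'K)\sim c_n T^{r(n)}$ (already established in the proof of Lemma~\ref{middletwo}) to convert the multiplicative shift in $T$ into a small multiplicative change in measure.

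The one noteworthy difference is in how the sandwich is obtained. The paper appeals to the strong wavefront lemma of Gorodnik--Oh to control the $KAK$ decomposition of $g\gamma$ for $g$ near the identity, and then uses submultiplicativity of the spectral norm. You instead go directly through the elementary singular-value inequality $\sigma_n(g)\,\sigma_i(x)\le\sigma_i(gx)\le\sigma_1(g)\,\sigma_i(x)$, which works cleanly here precisely because $C_T'$ is defined by bounding \emph{all} singular values (equivalently, by bounding both $\|x\|$ and $\|x^{-1}\|$). This is a genuine simplification: it bypasses the wavefront lemma entirely and makes the symmetric choice of neighborhood $U=\{g:\|g\|,\|g^{-1}\|<e^{\epsilon}\}$ do all the work. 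The paper's route via the wavefront lemma is more general-purpose (it would adapt to regions not described purely by singular-value bounds), but for this particular region your argument is shorter and self-contained.
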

\begin{proof}

We first show that for any $\epsilon>0$ there is a neighborhood $\mathcal N_{1,\epsilon}$ of identity in $\textrm{SL}_n(\mathbb R)$ such that
$$\bigcup_{g\in \mathcal N_{1,\epsilon}} gKC_T'K\subset KC_{(1+\epsilon)T}'K.$$
Let $\mathcal O_{\sqrt[n-1]{1+\epsilon}}$ be the neighborhood of identity of radius $\sqrt[n-1]{1+\epsilon}$.  Then $\mathcal O_{\sqrt[n-1]{1+\epsilon}}$ contains $U_1V$ for some neighborhood $U_1$ of identity in $K$ and some neighborhood $V$ of identity in $A$.  By the strong wavefront lemma (Theorem~2.1 in \cite{GO}) there exists a neighborhood $\mathcal N_{1,\epsilon}$ of identity in $\textrm{SL}_n(\mathbb R)$ such that $$\mathcal N_{1,\epsilon}\gamma\subset k_1U_1aVk_2=k_1U_1Vak_2$$ for all $\gamma=k_1ak_2$ in $KC_T'K$.  Hence we have
$$\mathcal N_{1,\epsilon} KC_T'K\subset KU_1VC_T'K\subset K \mathcal O_{\sqrt[n-1]{1+\epsilon}}C_T'K\subset KC_{(1+\epsilon)T}'K.$$
The last containment follows from the submultiplicativity of the spectral norm.  Namely, for any $g\in \mathcal O_{\sqrt[n-1]{1+\epsilon}}$ and any $h\in C_T'$, we have that $||gh||\leq ||g||\cdot||h||<T\sqrt[n-1]{1+\epsilon} <(1+\epsilon)T$, and $||(gh)^{-1}||\leq ||h^{-1}||\cdot||g^{-1}||<T(1+\epsilon)$ as well, and so $K\mathcal O_{\sqrt[n-1]{1+\epsilon}} C_T'K\subset KC_{(1+\epsilon)T}'K$.

We also have that for any $0<\epsilon<1$ there exists a neighborhood $\mathcal N_{2,\epsilon}$ of identity in $\textrm{SL}_n(\mathbb R)$ such that
$$KC_{(1-\epsilon)T}'K\subset \bigcap_{g\in \mathcal N_{2,\epsilon}}gKC_T'K.$$
In other words, we have that 
$$g^{-1}KC_{(1-\epsilon)T}'K\subset KC_T'K$$
for every $g\in \mathcal N_{2,\epsilon}$.  To see this, take $\mathcal N_{2,\epsilon}$ to be the neighborhood of radius $\frac{1}{\sqrt[n-1]{1-\epsilon}}$.  Then $g^{-1}$ is of norm less than $\frac{1}{1-\epsilon}$ and, for any $h\in KC_{(1-\epsilon)T}'K$, we have $||g^{-1}h||\leq ||g^{-1}||\cdot||h||<T$, and $||(g^{-1}h)^{-1}||\leq ||h^{-1}||\cdot||g||<T(1-\epsilon)^{1-\frac{1}{n-1}}<T$ for all $\epsilon>0$.  Hence we have $g^{-1}h\in KC_T'K$ for all $g\in \mathcal N_{2,\epsilon}$ and $h\in KC_{(1-\epsilon)T}'K$.

Recall from Section~\ref{modified} that $m(KC_T'K)\sim p(T)$ where $p$ is a polynomial of degree $n^2/2$ if $n$ is even and $n(n-1)/2$ if $n$ is odd.  For any $0<\epsilon'<1$ one can find $\epsilon_1>0$ such that $(1-\epsilon') =p(\frac{1}{1+\epsilon_1}$) and for any $\epsilon'>0$ there is an $0<\epsilon_2<1$ such that $(1+\epsilon') =p(\frac{1}{1-\epsilon_2})$. Let $U=\mathcal N_{1,\epsilon_1}\cap\mathcal N_{2,\epsilon_2}$, and note that (\ref{wellrounded}) is indeed satisfied for this choice of $U$.  

\vspace{0.1in}

\end{proof}

Now, given that $\{KC_T'K\}_{T\geq 1}$ is a well-rounded sequence of sets, we are able to prove the following lemma, using a slightly different definition of well-roundedness, which is equivalent to Definition~\ref{wellroundeddefn} by \cite{EM}.

\begin{defn}\label{wellroundeddefn2}
The sequence $\{B_n\}$ of sets is \emph{well-rounded} if for any $\epsilon>0$ there exists an open neighborhood $U$ of the identity in $\textrm{SL}_n(\mathbb R)$ such that
$$\frac{m(U\partial B_n)}{m(B_n)}<\epsilon$$
for all $n$.
\end{defn}

\begin{lemma}\label{gapwellrounded}
Let $\{KC_{T,\beta}K\}$ and $\{KC_{T,\beta}'K\}$ be as above.  Then there exists some $N\in \mathbb N$ such that the sequence $\{KC_{T,\beta}'K\}_{T>N}$ is well-rounded.
\end{lemma}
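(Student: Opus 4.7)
The plan is to verify Definition~\ref{wellroundeddefn2} directly: given $\epsilon>0$, exhibit a neighborhood $U$ of the identity in $\textrm{SL}_n(\mathbb R)$ such that $m(U\partial(KC_{T,\beta}'K))/m(KC_{T,\beta}'K)<\epsilon$ for all $T>N$. I will decompose the boundary as $\partial(KC_{T,\beta}'K)=\partial^{\mathrm{out}}\cup\partial^{\mathrm{gap}}$, where $\partial^{\mathrm{out}}$ is the locus $\|g\|=T$ or $\|g^{-1}\|=T$ (already part of $\partial(KC_T'K)$), and $\partial^{\mathrm{gap}}$ is the locus where the spectral-gap inequality is saturated: $a_{n/2}(g)=\beta a_{n/2+1}(g)$ for $n$ even, and either $a_k(g)=\beta a_{k+1}(g)$ or $a_{k+1}(g)=\beta^{2}a_{k+2}(g)$ for $n$ odd with $k=(n-1)/2$. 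The two pieces are handled separately.

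For $\partial^{\mathrm{out}}$, Lemma~\ref{wellrounded1} supplies a neighborhood $U_1$ with $m(U_1\partial(KC_T'K))<(\epsilon/2)\cdot m(KC_T'K)$. The end of the proof of Lemma~\ref{middletwo} bounds the measure of the gap-violated region by $O(T^{d-1})$, where $d$ denotes $n^2/2$ (even) or $n(n-1)/2$ (odd), while the opening of that proof shows $m(KC_T'K)\sim\alpha T^{d}$. Hence $m(KC_{T,\beta}'K)\sim\alpha T^{d}$ as well, and for $T$ large enough $m(U_1\partial^{\mathrm{out}})/m(KC_{T,\beta}'K)<\epsilon/2$.

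For $\partial^{\mathrm{gap}}$, I take $U_2=\mathcal O_{r}$ with $r=1+\delta>1$, mirroring the argument in Lemma~\ref{wellrounded1}. By submultiplicativity of the operator norm, $a_i(h)/r\leq a_i(gh)\leq r\,a_i(h)$ for every $g\in U_2$ and every $h\in\textrm{SL}_n(\mathbb R)$, so $U_2\partial^{\mathrm{gap}}$ is contained in the strip
$$S_\delta(T):=\left\{g\in KC_{rT}'K\;\Big|\;\tfrac{a_k(g)}{a_{k+1}(g)}\in[\beta/r^2,\beta r^2]\right\}$$
(with both middle ratios similarly constrained when $n$ is odd). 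Pulling back to $A$, the Haar integral (\ref{haar}) expressing $m(S_\delta(T))$ is restricted to the subregion of $R$ on which $j_k-j_{k+1}$ lies in an interval of length $O(\delta)$ around $2\log\beta$; this forces $j_k-j_{k+1}$ to be bounded, and so, by the same leading-term analysis that yields (\ref{uppergap}) in the proof of Lemma~\ref{middletwo}, $m(S_\delta(T))\ll_{\delta,\beta}T^{d-1}$, strictly lower order than $m(KC_{T,\beta}'K)\sim\alpha T^{d}$. Hence for $T>N$ sufficiently large $m(U_2\partial^{\mathrm{gap}})/m(KC_{T,\beta}'K)<\epsilon/2$, and $U:=U_1\cap U_2$ verifies the definition. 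The principal technical step is the strip estimate on $S_\delta(T)$, but this is essentially the same bookkeeping already carried out for Lemma~\ref{middletwo}, so no genuinely new obstacle appears.
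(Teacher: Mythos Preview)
Your argument is correct and follows essentially the same route as the paper: decompose $\partial(KC_{T,\beta}'K)$ into the piece shared with $\partial(KC_T'K)$ and the piece lying on the ``gap'' hyperplane, control the first via Lemma~\ref{wellrounded1} together with $m(KC_{T,\beta}'K)\sim m(KC_T'K)$, and control the second by the same lower-order estimate that yields (\ref{uppergap}) in Lemma~\ref{middletwo}. One small imprecision: from $\|g\|\le r$ in $\textrm{SL}_n$ you only get $a_n(g)\ge r^{-(n-1)}$, so the clean two-sided bound $a_i(h)/r\le a_i(gh)\le r\,a_i(h)$ requires either the symmetric neighborhood $\{g:\|g\|,\|g^{-1}\|\le r\}$ or the adjusted lower bound $a_i(gh)\ge r^{-(n-1)}a_i(h)$; either fix leaves your strip estimate and conclusion unchanged.
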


To prove this, we essentially use Lemma~\ref{wellrounded1} together with the fact that there is some $R>0$ such that $\frac{m(KC_{T,\beta}'K)}{m(KC_T'K)}>R$, and an analysis similar to that in the previous section.

\begin{proof}

First, note that  $KC_T'K$ and $KC_{T,\beta}'K$ can be viewed as convex polytopes in $\mathbb R^n$: for example, $KC_T'K$ is described in this way in (\ref{polyR2}).  In fact, the polygon corresponding to $KC_{T,\beta}'K$ is obtained from $KC_T'K$ by cutting the polygon corresponding to $KC_T'K$ by the hyperplane $j_1=j_2+\beta$: i.e., it is the intersection of the polygon corresponding to $KC_T'K$ with $j_1>j_2+\beta$.

Now, by the proof of Lemma~\ref{middletwo}, we have that for any $\eta>0$, there is some $N\in\mathbb N$
\begin{equation}\label{density1}
\frac{m(KC_{T,\beta}'K)}{m(KC_T'K)}>1-\eta=R.
\end{equation}
Let $\epsilon>0$, and let $U$ be the neighborhood of identity in $\textrm{SL}_n(\mathbb R)$ such that 
$$\frac{m(U\partial(KC_T'K))}{m(KC_T'K)}<R\epsilon/2$$
for all $T$.  From above, we have that the boundary of $KC_{T,\beta}'K$ is the union of part of the boundary of $KC_T'K$ and a polygon $P_T'$ sitting inside the hyperplane $j_1=j_2+\beta$.  By the same argument as in the proof of Lemma~\ref{middletwo}, there exists $N'\in\mathbb N$ and a neighborhood $U'$ of identity in $\textrm{SL}_n(\mathbb R)$ so that $m(U'P_T')<R\epsilon/2$ for all $T>N'$ (here it is key that $P_T'$ comes nowhere near the vertex of the polygon corresponding to $KC_T'K$ where the functions in the integrals (\ref{weirdfirstinteven}) and (\ref{weirdfirstintodd}) obtain their maxima).  Let $U''=U\cap U'$.  Then 
$$\frac{m((U''\partial(KC_{T,\beta}'K))}{m(KC_{T,\beta}'K)}<\frac{m(U''\partial(KC_T'K))+R\epsilon/2}{Rm(KC_T'K)}<\epsilon$$
for all $T>\max(N,N')$, as desired.

\end{proof}

\section{Lyapunov exponent estimates}
\label{lyapsec}
Suppose we have a (finite, for simplicity) collection $A_1, \dots, A_k$ of matrices in $\textrm{GL}_n(\mathbb Z),$ and let $P= (p_1, \dotsc, p_k)$ be a probability vector. Let $\mu = P^{\mathbb{Z}^+}$ be the associated Bernoulli measure on the set of sequences $\sigma = (1, \dotsc, k)^{\mathbb{Z}^+}.$ The \emph{Lyapunov exponent} $\lambda$ is given by 
\begin{equation}
\lambda = \lim_{n\rightarrow \infty}\frac1n \int \|A_1 \dots A_{i_n}\| d \mu(\underline{i}),
\end{equation}
where $\underline{i} = (i_n)_{n=1}^\infty.$

The celebrated result of Furstenberg and Kesten \cite{furstenberg1960products} states that $\mu$-almost everywhere, 
\begin{equation}
\lim_{n\rightarrow \infty}\frac1n \|A_1\dots a_{i_n}\| = \lambda.
\end{equation}
This result is an immediate consequence of Kingman's Subadditive Ergodic Theorem (which was discovered a few years after Furstenberg and Kesten's paper appeared). The hard part is estimating the exponent $\lambda$ (even proving that it is positive is highly nontrivial), but, luckily, when we have $(r,\epsilon)$-proximal transformations, that is relatively easy, as we show below. One of the many interpretations of the Lyapunov exponent is as the rate of divergence (or ``drift'', as it is known in the jargon) of the matrix product from the identity - the logarithm of the matrix norm is not so different from the distance in $\textrm{SL}_n.$

To estimate the drift in our setting, we need a couple of lemmas (which are essentially trivial exercises in linear algebra - we are not claiming that these are particularly original). 

First, we recall (as so many times before in this paper) the singular value decomposition: every matrix $A\in M^{n\times n}$ can be written as $U^t(A) D(A) V(A),$ where $U, V \in SO(n),$ and $D$ diagonal with entries $\lambda_1 \geq \lambda_2 \geq \dots \geq\lambda_n \geq 0.$ The $\lambda$'s are the \emph{singular values} of $A.$ We will denote the first rows of $V$ and $U$ by $v_1(A)$ and $u_1(A),$ respectively.
\begin{lemma}
\label{singleop}
Let $A \in M^{n\times n},$ with $\lambda_1(A) \gg \lambda_2(A),$ and let $w$ be a vector. Then, $$A^k w \sim \langle w, v_1(A)\rangle \langle v_1(A), u_1(A)\rangle^{k-1} \lambda_1^k u_1(A).$$
\end{lemma}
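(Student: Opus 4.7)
The plan is to expand in the singular basis, extract the leading term, and then iterate. Write the singular value decomposition $A = U^t D V$ with $D = \mathrm{diag}(\lambda_1,\dots,\lambda_n)$, and denote by $v_1,\dots,v_n$ the rows of $V$ and by $u_1,\dots,u_n$ the rows of $U$. These form orthonormal bases of $\mathbb R^n$, and a direct computation gives the closed form
\begin{equation*}
Aw \;=\; \sum_{i=1}^n \lambda_i \,\langle v_i, w\rangle\, u_i,
\end{equation*}
simply because $V w$ has $i$-th coordinate $\langle v_i,w\rangle$, $D$ scales this by $\lambda_i$, and $U^t e_i = u_i$.

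The hypothesis $\lambda_1 \gg \lambda_2$ means that for any vector $w$ with $\langle v_1, w\rangle$ not too small, the term $i=1$ dominates:
\begin{equation*}
Aw \;=\; \lambda_1 \langle v_1,w\rangle u_1 \;+\; O\!\left(\lambda_2 \|w\|\right),
\end{equation*}
so that, to leading order, $Aw$ is parallel to $u_1$ with coefficient $\lambda_1 \langle v_1,w\rangle$. I would then proceed by induction on $k$. Assuming $A^{k-1} w \sim \langle w, v_1\rangle \langle v_1, u_1\rangle^{k-2} \lambda_1^{k-1} u_1$, one applies $A$ once more; by the displayed formula,
\begin{equation*}
A\bigl(A^{k-1} w\bigr) \;\sim\; \lambda_1 \,\langle v_1, A^{k-1} w\rangle \, u_1 \;\sim\; \lambda_1^k \,\langle w, v_1\rangle \,\langle v_1, u_1\rangle^{k-1}\, u_1,
\end{equation*}
which is the claimed asymptotic. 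The extra factor of $\langle v_1, u_1\rangle$ at each step arises because the output direction $u_1$ must be fed back in and re-projected onto the input direction $v_1$.

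The routine bookkeeping is simply controlling the error terms uniformly in $k$: at each iteration a multiplicative error of size $O(\lambda_2/\lambda_1)$ relative to the main term is introduced, and these accumulate benignly so long as $\langle v_1, u_1\rangle$ is bounded away from zero (which is the implicit non-degeneracy assumption behind the ``$\sim$''). The main obstacle — and the only real subtlety — is exactly this: if $\langle v_1,u_1\rangle$ is much smaller than $\lambda_2/\lambda_1$, then the ``dominant'' term is in fact smaller than the error, and one must instead track the next singular direction $u_2$ to find the true leading behavior. In the regime where $\langle v_1,u_1\rangle$ is of order $1$, however, the induction above gives the stated asymptotic cleanly.
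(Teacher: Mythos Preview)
Your proof is correct and is precisely the ``simple computation'' the paper has in mind; the paper's own proof consists of nothing more than the phrase ``Simple computation.'' Your added caveat about the non-degeneracy of $\langle v_1,u_1\rangle$ is a fair observation and is implicitly assumed in the statement's use of ``$\sim$''.
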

\begin{proof}
Simple computation.
\end{proof}
\begin{lemma}
\label{twoops}
Let $A, B \in \GLn(\mathbb Z)$ be such that $\lambda_1(A), \lambda_1(B), \lambda_1(A^{-1}), \lambda_1(B^{-1}) \geq \lambda \gg 1.$ Assume, further, that all the inner products $\langle u_1(A), v_1(A)\rangle, \langle u_1(A), v_1(B)\rangle, \langle u_1(A),\\ v_1(B), \langle(u_1(B), v_1(B)\rangle$ are bigger than $\epsilon$ in absolute value. Let $w(A, B)$ be a reduced word of length $k$ in $A, B, A^{-1}, B^{-1}.$ Then \begin{equation}
\| w(A, B) u_1(A)\| \geq (\epsilon \lambda)^k.
\end{equation}
\end{lemma}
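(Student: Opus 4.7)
The plan is to apply Lemma \ref{singleop} in the special case $k=1$, i.e.\ the identity $Cw\sim\langle w,v_1(C)\rangle\lambda_1(C)u_1(C)$, one letter at a time, propagating inductively both a lower bound on the norm and control on the direction of the running vector.

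Concretely, I would write the word as $w(A,B)=C_k C_{k-1}\cdots C_1$ with each $C_i\in\{A,A^{-1},B,B^{-1}\}$ and, by reducedness, $C_{i+1}\neq C_i^{-1}$. Set $w_0:=u_1(A)$ and $w_i:=C_i w_{i-1}$, so $w_k=w(A,B)u_1(A)$. I would prove by induction on $i$ that $\|w_i\|\geq(\epsilon\lambda)^i$ and simultaneously that $w_i/\|w_i\|$ is close to $u_1(C_i)$, with an error controlled by the spectral gap of $C_i$. For the inductive step, suppose $w_{i-1}=\alpha_{i-1}u_1(C_{i-1})+e_{i-1}$ with $|\alpha_{i-1}|\geq(\epsilon\lambda)^{i-1}$ and $\|e_{i-1}\|$ small relative to $|\alpha_{i-1}|$. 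Then $\langle w_{i-1},v_1(C_i)\rangle$ equals $\alpha_{i-1}\langle u_1(C_{i-1}),v_1(C_i)\rangle$ plus a small error, and by the hypothesis of the lemma — applied to the ordered pair $(C_{i-1},C_i)$, which is among the allowed pairs precisely because the word is reduced — the inner product $\langle u_1(C_{i-1}),v_1(C_i)\rangle$ has absolute value at least $\epsilon$. Applying Lemma \ref{singleop} to $C_i$ acting on $w_{i-1}$ then gives $w_i$ close to $\langle w_{i-1},v_1(C_i)\rangle\lambda_1(C_i)u_1(C_i)$; combined with $\lambda_1(C_i)\geq\lambda$, this yields $\|w_i\|\geq\epsilon\lambda\|w_{i-1}\|\geq(\epsilon\lambda)^i$, and the direction of $w_i$ is again close to $u_1(C_i)$, closing the induction.

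The role of reducedness here is essential: if $C_{i+1}=C_i^{-1}$ were permitted, then $v_1(C_{i+1})$ would coincide with the contracting direction of $C_i$, which is orthogonal to $u_1(C_i)$, so $\langle u_1(C_i),v_1(C_{i+1})\rangle$ would vanish and no analogous hypothesis could rescue it. The one genuine technical obstacle is controlling the compounding of the error terms $e_i$ coming from the ``$\sim$'' in Lemma \ref{singleop}. The relative error introduced at step $i$ is of order $\lambda_2(C_i)/(\epsilon\lambda_1(C_i))$; the very-proximality forced on $A$, $B$, $A^{-1}$, and $B^{-1}$ by the assumption that all of $\lambda_1(A)$, $\lambda_1(B)$, $\lambda_1(A^{-1})$, $\lambda_1(B^{-1})$ are comparable to $\lambda\gg 1$ makes these ratios very small, and a routine geometric-series estimate shows the total accumulated error remains negligible compared to $(\epsilon\lambda)^i$, preserving the clean bound $\|w(A,B)u_1(A)\|\geq(\epsilon\lambda)^k$.
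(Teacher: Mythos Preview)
Your proposal is correct and is exactly the approach the paper takes: the paper's entire proof reads ``Simple computation using Lemma~\ref{singleop}, and the observation that $u_1(A^{-1}) = v_1(A),$ etc.'' Your inductive one-letter-at-a-time application of Lemma~\ref{singleop} is precisely this simple computation spelled out, and the identity $u_1(C^{-1})=v_1(C)$, $v_1(C^{-1})=u_1(C)$ is what lets the stated hypotheses on inner products among $u_1(A),v_1(A),u_1(B),v_1(B)$ cover all the pairs $\langle u_1(C_{i-1}),v_1(C_i)\rangle$ that actually arise from a reduced word.
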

\begin{proof}
Simple computation using Lemma \ref{singleop}, and the observation that $u_1(A^{-1}) = v_1(A),$ etc.
\end{proof}
We will finally need a fact:
\begin{fact}
\label{freelen}
Let $w(x, y)$ be a random product of $x, y, x^{-1}, y^{-1},$ of length $n.$ Then, with probability tending to one as $n$ goes to infinity, the reduced length of $w$ is at least $3\ell(w)/4.$
\end{fact}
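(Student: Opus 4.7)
The plan is to view the partial products $w_k := a_1 a_2 \cdots a_k$ as a random walk on the Cayley graph of $F_2$ (the $4$-regular tree) and to show that the reduced length $L_n := |w_n|_{F_2}$ satisfies $L_n \geq 3n/4$ with probability $1 - o(1)$. The key structural observation is that $(L_k)_{k \ge 0}$ is itself a birth--death chain on $\ZZ_{\ge 0}$: from $L_k = 0$ it moves deterministically to $1$, while from any state $L_k = \ell \ge 1$ it moves to $\ell - 1$ with conditional probability $1/4$ (exactly when the next uniformly chosen letter $a_{k+1}$ is the formal inverse of the rightmost letter of the current reduced word) and to $\ell + 1$ otherwise.

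First I would argue ballistic transience. The conditional expected one-step increment of $L_k$ at any non-origin state is $\tfrac{3}{4} - \tfrac{1}{4} > 0$, so by coupling with a biased $\pm 1$ random walk on $\ZZ_{\ge 0}$ the chain $L_k$ escapes to infinity almost surely and visits $0$ only finitely often. This lets us disregard the boundary at $0$ in the asymptotic analysis.

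Next I would establish concentration of $L_n$ around its mean via bounded differences. Exposing the letters one at a time, the Doob martingale differences $D_i := \mathbb{E}[L_n \mid a_1, \dots, a_i] - \mathbb{E}[L_n \mid a_1, \dots, a_{i-1}]$ are uniformly bounded, because altering a single letter $a_i$ perturbs the subsequent stack-reduction process only by a bounded cascade of stack operations; hence $|D_i| \le C$ for some universal constant $C$. Azuma--Hoeffding then yields $\Pr\bigl(|L_n - \mathbb{E}[L_n]| > \varepsilon n\bigr) \le 2 e^{-c\varepsilon^2 n}$ for every $\varepsilon > 0$, reducing the problem to the deterministic lower bound $\mathbb{E}[L_n] \ge (3/4 - o(1)) n$.

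The main obstacle is precisely this last step: extracting the stronger constant $3/4$ rather than the naive drift value $1/2$ that one reads off directly from the $(+1,-1)$ increments. I would decompose $L_n = \sum_{i=1}^n \chi_i$, where $\chi_i \in \{0,1\}$ indicates that the $i$-th letter of the pre-reduced product survives into the fully reduced word, and compute $\mathbb{E}[\chi_i]$ by coupling the forward stack-reduction process with its time-reversal. Translation invariance of the cancellation pairing in the interior of $w$ (up to boundary contributions of size $O(\log n)$ from the first and last few letters) then shows that the asymptotic survival probability of an interior letter equals $3/4$, so $\mathbb{E}[L_n] = \tfrac{3}{4} n - O(\log n)$. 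Combined with the concentration bound, this gives $L_n \ge 3n/4$ with probability tending to $1$, as required.
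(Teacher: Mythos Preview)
The paper records this as a ``Fact'' without proof, so there is no argument to compare against. Your proposal, however, breaks down at exactly the point you flag as ``the main obstacle,'' and in fact the constant $3/4$ is unattainable in the i.i.d.\ uniform--letter model. Your birth--death description is correct --- away from $0$ the increment of $L_k$ is $+1$ with probability $3/4$ and $-1$ with probability $1/4$ --- but that gives drift $\tfrac12$, and summing conditional increments yields $\mathbb{E}[L_n]=\tfrac{n}{2}+O(1)$ (the $O(1)$ coming from $\sum_k\Pr(L_k=0)<\infty$). Together with concentration this gives $L_n/n\to\tfrac12$, the speed of simple random walk on the $4$--regular tree; in particular $\Pr(L_n\ge 3n/4)\to 0$. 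Your claimed ``survival probability $3/4$'' conflates two different events: the probability that appending letter $i$ \emph{increases} $L_i$ is $3/4$, but the probability that letter $i$ \emph{remains} in the fully reduced $w_n$ is not the same, since a forward step at time $i$ can still be cancelled by a later backward step. The identity $\sum_i\mathbb{E}[\chi_i]=\mathbb{E}[L_n]\sim n/2$ forces the average survival probability to be $1/2$, so no ``translation invariance'' argument can produce $3/4$.

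A secondary gap: your Azuma justification via coordinatewise bounded differences is also false as stated. Changing a single letter can move $L_n$ by order $n$: in $w=x^k\,y^{-1}\,y\,x^{-k}$ (reduced length $0$), flipping the $y^{-1}$ to $y$ gives $x^k\,y\,y\,x^{-k}$, which is already reduced of length $2k+2=n$. Concentration does hold, but one should argue through the bounded increments of the chain $(L_k)$ itself, e.g.\ apply Azuma to the martingale $L_k-\sum_{j\le k}\mathbb{E}[L_j-L_{j-1}\mid\mathcal F_{j-1}]$, or simply couple $L_k$ above a biased $\pm1$ walk on $\ZZ$. The upshot is that the sound part of your outline proves the Fact with $1/2$ in place of $3/4$; that weaker constant is presumably what was intended, and it is all the subsequent theorem on Lyapunov exponents actually needs.
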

Finally, we can combine the above simple observations to state:
\begin{thm}
Let $A, B$ be picked uniformly at random from the symmetrized ball of radius $R.$ Then, the Lyapunov exponent of the group generated by $A, B$ goes to infinity with $R,$ with high probability.
\end{thm}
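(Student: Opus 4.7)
The strategy combines the ingredients already in place: Lemmas \ref{singleop} and \ref{twoops} give a pointwise norm lower bound on reduced words in $A^{\pm 1},B^{\pm 1}$ provided the top singular values of $A,B$ and their inverses are large and the principal singular directions are in ``general position.'' Fact \ref{freelen} ensures that typical random words of length $n$ have reduced length at least $3n/4$. What remains is to verify that the hypotheses of Lemma \ref{twoops} hold with $\mu_R'$-probability tending to $1$, and with the common lower bound on singular values tending to infinity with $R$.

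More precisely, for any $T>1$ and $\epsilon\in(0,1/4)$, consider the event $E_{T,\epsilon}\subset B_R'(G)\times B_R'(G)$ on which (a) $\min\{\lambda_1(A),\lambda_1(B),\lambda_1(A^{-1}),\lambda_1(B^{-1})\}\geq T$, and (b) each of the four inner products $|\langle u_1(A),v_1(A)\rangle|$, $|\langle u_1(A),v_1(B)\rangle|$, $|\langle u_1(B),v_1(A)\rangle|$, $|\langle u_1(B),v_1(B)\rangle|$ exceeds $\epsilon$. The plan is to show $\mu_R'(E_{T,\epsilon})\to 1$ as $R\to\infty$ for every fixed $T,\epsilon$. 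Clause (a) is obtained by an integral computation of the exact type in the proof of Lemma \ref{middletwo}: the Haar mass on the polytope \eqref{polyR2} concentrates near the corner $(\log R,\dots,\log R,-\log R,\dots,-\log R)$, while the sub-region carved out by $|j_1|<\log T$ contributes only a lower-order amount, so $\mu_R'(\lambda_1(A)<T)\to 0$, and the same for $B$, $A^{-1}$, $B^{-1}$. Clause (b) follows from the equidistribution-in-sectors argument used in Lemma \ref{prox}: combining Theorem 1.6 of \cite{GO} with the well-roundedness of Section \ref{wellroundedsec}, the $K$-parts $(k_A,k_A',k_B,k_B')$ of the Cartan decompositions of $A$ and $B$ equidistribute asymptotically with respect to Haar measure on $K^4$, and for Haar-generic $(k_A,k_A',k_B,k_B')$ the four relevant inner products are bounded below.

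Condition on $E_{T,\epsilon}$. Lemma \ref{twoops} then gives $\|w\,u_1(A)\|\geq(\epsilon T)^k$ for every reduced word $w$ in $A^{\pm1},B^{\pm1}$ of reduced length $k$. If $(i_j)_{j\geq 1}$ is a uniform Bernoulli sample in $\{A,B,A^{-1},B^{-1}\}^{\mathbb{N}}$ and $w_n=X_{i_1}\cdots X_{i_n}$, Fact \ref{freelen} says the reduced length of $w_n$ is at least $3n/4$ with probability approaching $1$, so
\begin{equation*}
\frac{1}{n}\log\|w_n\|\ \geq\ \frac{3}{4}\log(\epsilon T)
\end{equation*}
eventually almost surely. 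By the Furstenberg--Kesten theorem the left side converges to the Lyapunov exponent $\lambda(\langle A,B\rangle)$, and so on $E_{T,\epsilon}$,
\begin{equation*}
\lambda(\langle A,B\rangle)\ \geq\ \tfrac{3}{4}\log(\epsilon T).
\end{equation*}
Fixing $\epsilon$ once and for all and letting $T\to\infty$ after $R\to\infty$ yields the theorem.

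The only substantive technical point is clause (b): four simultaneous lower bounds, coupling both the left and the right $K$-parts of $A$ and $B$. It is tempting to prove these directly by hand, but the cleanest route is to observe that each of the four sub-events is of precisely the form addressed in the proof of Lemma \ref{prox}, so one obtains the desired bound by a union bound over four applications of the same argument, with no new estimate required. Everything else in the proof is a bookkeeping exercise that glues together Lemmas \ref{singleop}, \ref{twoops}, Fact \ref{freelen}, and the standard Furstenberg--Kesten formalism.
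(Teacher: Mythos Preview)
Your proposal is essentially the paper's own argument: verify the hypotheses of Lemma~\ref{twoops} (large top singular values, principal singular directions in general position) via the volume computations and equidistribution-in-sectors machinery of Section~\ref{modified}, then combine with Fact~\ref{freelen} and Furstenberg--Kesten.

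The one point worth flagging is that the paper carries this out \emph{in the $k$-th exterior power representation}, citing Lemma~\ref{middletwo} so that the middle gap $a_k/a_{k+1}$ becomes the top gap $\lambda_1/\lambda_2$ there. Your clause~(a), by contrast, only shows $\lambda_1(A)\geq T$ in the standard representation (mass concentrates near the corner of the polytope~\eqref{polyR2}, where $j_1\approx j_2\approx\cdots\approx j_k\approx\log R$). For $n>2$ this does \emph{not} give $\lambda_1/\lambda_2$ large---indeed, Theorem~\ref{toptwoth} says precisely that the top gap fails to open up generically---and Lemma~\ref{singleop}, on which Lemma~\ref{twoops} rests, requires that gap. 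The fix is exactly the paper's: pass to $\bigwedge^k(\mathbb R^n)$, where Lemma~\ref{middletwo} supplies both $\lambda_1\to\infty$ and $\lambda_1/\lambda_2\to\infty$, and note that a lower bound on the Lyapunov exponent of $\bigwedge^k$ forces the top exponent in the standard representation to infinity as well.
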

\begin{proof} By Lemma \ref{middletwo}, we know that the the gap between the top and second singular values of $A, B$ (when acting on the appropriate exterior power) goes to infinity with $R.$ On the other hand, by equidistribution in sectors, the vectors $u_1(A), v_1(A), u_1(B), v_1(b)$ are uniformly distributed in the orthogonal group, so the absolute values of their inner products are bounded below by $\epsilon$ with probability $p(\epsilon)$ asymptotically independent of $R.$ The result now follows from Fact \ref{freelen} and Lemma \ref{twoops}.
\end{proof}
\section{Directions for further study}
\label{future}
Here, we enumerate some questions raised by this work.

First, it seems clear that the results should hold for lattices in other semisimple groups (the symplectic group comes to mind).

Although our methods do not immediately apply to arbitrary matrix norms, as demonstrated in this paper, the results should likely hold for arbitrary matrix norms, and, perhaps more interestingly, for non-archimedean height as in Aoun's work.

Finally, instead of looking at the intersection of a \emph{lattice} with a norm ball, we could look at the intersection of an arbitrary Zariski-dense subgroup. Notice that in the ``combinatorial height" setting, the results are the same as for lattices, so this should be true here too. It is clear that this would require quite different methods.

\bibliography{thin}
\bibliographystyle{plain}
\end{document}